\newtheorem{tm}{Theorem}
\newtheorem{defin}{Definition}
\newtheorem{rk}{Remark}
\newtheorem{prop}{Proposition}
\newtheorem{lem}{Lemma}
\newtheorem{coro}{Corollary}
\numberwithin{equation}{section}
\newcommand{\RR}[1]{\mathbb{#1}}
\newcommand{\rd}{{\mathbb R^d}}
\newcommand{\rr}{{\mathbb R}}
\def\R{{\mathbb R}}
\def\a{\alpha}
\def\E{{\mathbb E}}
\def\P{{\mathbb P}}
\begin{document}

\title{Intermittence and  time fractional stochastic partial differential equations}

\author{Jebessa B Mijena}
\address{Department of Mathematics, Georgia College \& State University, Milledgeville, GA 31061}
\email{jebessa.mijena@gcsu.edu}
\author{ Erkan Nane}
\address{Department of Mathematics and Statistics, Auburn University, Auburn, AL 36849, USA}
\email{nane@auburn.edu}
\keywords{}

\date{\today}

\subjclass[2000]{}

\begin{abstract}
We consider  time fractional  stochastic heat type equation
$$\partial^\beta_tu(t,x)=-\nu(-\Delta)^{\alpha/2} u_t(x)+I^{1-\beta}_t[\sigma(u)\stackrel{\cdot}{W}(t,x)]$$ in $(d+1)$ dimensions, where  $\nu>0$, $\beta\in (0,1)$, $\alpha\in (0,2]$, $d<\min\{2,\beta^{-1}\}\a$,  $\partial^\beta_t$ is the Caputo fractional derivative, $-(-\Delta)^{\alpha/2} $ is the generator of an isotropic stable process,  $\stackrel{\cdot}{W}(t,x)$ is space-time white noise, and $\sigma:\RR{R}\to\RR{R}$ is Lipschitz continuous.
 The time fractional stochastic heat type equations might be used to model phenomenon with random effects with thermal memory. We prove: (i) absolute moments of the solutions of this equation grows exponentially; and (ii) the distances to the origin of the farthest high peaks of those moments grow exactly linearly with time.  These results extend the results of Foondun and Khoshnevisan \cite{foondun-khoshnevisan-09}
and Conus and Khoshnevisan \cite{conus-khoshnevisan}
on the parabolic stochastic heat equations.
\end{abstract}

\keywords{Caputo fractional derivative, time fractional SPDE, intermittency, intermittency fronts}

\maketitle

\section{Introduction}
In recent years a growing literature has been devoted to the study of the time-fractional  diffusion equations. A typical form of the time fractional diffusion equations is $\partial^\beta_tu=\nu \Delta u$  with $\beta\in (0,1)$. These equations are related with anomalous diffusions or diffusions in non-homogeneous media, with random fractal structures; see, for instance, \cite{meerschaert-nane-xiao}.
The Caputo fractional derivative $\partial^\beta_t$  first appeared in \cite{Caputo} is defined for $0<\beta<1$ by
\begin{equation}\label{CaputoDef}
\frac{\partial^\beta u_t(x)}{\partial t^\beta}=\frac{1}{\Gamma(1-\beta)}\int_0^t \frac{\partial
u_r(x)}{\partial r}\frac{dr}{(t-r)^\beta} .
\end{equation}
Its Laplace transform
\begin{equation}\label{CaputolT}
\int_0^\infty e^{-st} \frac{\partial^\beta u_t(x)}{\partial
t^\beta}\,ds=s^\beta \tilde u_s(x)-s^{\beta-1} u_0(x)
\end{equation}
where $\tilde u_s(x) = \int_0^\infty e^{-st}u_t(x)dt$ and incorporates the initial value in the same way as the first
derivative.

Rigorous mathematical approaches  to time fractional diffusion (heat type) equations have been carried out  in  \cite{Koc89, Nig86, Wyss86}; see,  for example,  \cite{NANERW} for a short survey on these results. The solutions to fractional diffusion equations are strictly related with stable densities. Indeed, the stochastic solutions  can be realized through time-change by inverse stable subordinators and therefore we obtain time-changed  processes.   A couple of recent works in this field are \cite{mnv-09, OB09}.

The time fractional SPDEs studied in this paper may arise naturally by considering the heat equation in a material with thermal memory; see \cite{chen-kim-kim-2014} and references therein:
Let $u_t(x), e(t,x)$ and $\stackrel{\to}{F}(t,x)$ denote the body temperature, internal energy and flux density, reps.
Let $\theta, \lambda>0$. The relations
\begin{equation}\begin{split}
\partial_te(t,x)&=-div \stackrel{\to}{F}(t,x)\\
e(t,x)=\theta u_t(x), \ \ \stackrel{\to}{F}(t,x)&=-\lambda\nabla u(t,x)
\end{split}\end{equation}
yields the classical heat equation $\theta \partial_tu_t(x)=\lambda\Delta u_t(x)$.

According to the law of classical heat equation, the speed of heat flow is infinite.
But  the propagation speed can be finite because the heat flow can be disrupted by the response of the material.
In a material with thermal memory Lunardi and Sinestrari \cite{lunardi-sinestrari},  von Wolfersdorf \cite{von-wolfersdorf}  showed that
$$e(t,x)=\bar{\theta}u_t(x)+\int_0^tn(t-s)u_s(x)ds$$
holds with some appropriate constant $\bar{\beta}$ and kernel $n$.  In most cases we would have $n(t)=\Gamma(1-\beta_1)^{-1}t^{-\beta_1}$.
The convolution implies that the nearer past affects the present more.
If in addition the internal energy also depends on past random effects, then
\begin{equation}\label{phys-1}
\begin{split}
e(t,x)&=\bar{\theta}u_t(x)+\int_0^tn(t-s)u_s(x)ds\\
\ \ \ &+\int_0^t l(t-s)h(s, u_s(x))\frac{\partial {W}(ds, x)}{\partial x}
\end{split}\end{equation}
where $W$ is the space time white noise, modeling the random effects.
Take $l(t)=\Gamma(2-\beta_2)^{-1}t^{1-\beta_2}$,
then after differentiation \eqref{phys-1} gives
\begin{equation}\label{eq:physics-motivation}
\partial_t^{\beta_1}u_t(x)=div\stackrel{\to}{F}+ \frac{1}{\Gamma(1-\beta_2)} \int _0^t(t-s)^{-\beta_2}h(s,u_s(x))\frac{\partial W(ds,x)}{\partial x}.
\end{equation}

Let $\gamma>0$, define the fractional integral by
$$I^{\gamma}_tf(t):=\frac{1}{\Gamma(\gamma)} \int _0^t(t-\tau)^{\gamma-1}f(\tau)d\tau.$$
For every  $\gamma>0$, and   $g\in L^\infty(\rr_+)$ or $g\in C(\rr_+)$, we have the following relation
$$ \partial _t^\gamma I^\gamma_t g(t)=g(t).$$

 In this paper we will study intermittency and intermittency fronts for the solution of  the type of stochastic equations in \eqref{eq:physics-motivation} and  its extensions:
 \begin{equation}\label{tfspde}
\begin{split}
 &\partial^\beta_tu_t(x)=-\nu(-\Delta)^{\alpha/2} u_t(x)+I^{1-\beta}_t[\sigma(u)\stackrel{\cdot}{W}(t,x)],\ \ t> 0,\, x\in\R^d;\\
 &u_t(x)|_{t=0}=u_0(x),
 \end{split}
 \end{equation}
where the initial datum $u_0$ is $L^p(\Omega)$-bounded ($p\ge 2$), that is,
\begin{equation}\label{Eq:Initial}
\sup_{x\in\R^d}\E[|u_0(x)|^p]<\infty,
\end{equation}
 $-(-\Delta)^{\alpha/2} $ is the fractional Laplacian with $\alpha\in (0,2]$, and $\stackrel{\cdot}{W}(t,x)$ is a  space-time white noise with $x\in \R^d$.
When $\sigma(u)=1$, the fractional integral above in equation \eqref{tfspde}  is defined as
 $$I^{1-\beta}_t[\sigma(u)\stackrel{\cdot}{W}(t,x)]= \frac{1}{\Gamma(1-\beta)} \int _0^t(t-\tau)^{-\beta}\frac{\partial W(d\tau,x)}{\partial x}$$  is well defined only when $0<\beta<1/2$. It is a type of Rieman-Liouville process.

Let $G_t(x)$ be the fundamental solution
to the fractional heat type equation
\begin{equation}\label{Eq:Green0}
\partial^\beta_t G_t(x)=-\nu(-\Delta)^{\alpha/2}G_t(x).
\end{equation}
We know that $G_t(x)$ is the density function of $X(E_t)$, where $X$ is an isotropic $\a$-stable L\'evy process in $\R^d$ and $E_t=\inf \{u:\ D(u)>t\}$,  is the first passage time of a $\beta$-stable subordinator $D=\{D_r,\,r\ge0\}$, or the inverse stable subordinator of index $\beta$: see, for example, Bertoin \cite{bertoin} for properties of these processes, Baeumer and Meerschaert \cite{fracCauchy} for more on time fractional diffusion equations, and Meerschaert and Scheffler  \cite{limitCTRW} for properties of the inverse stable subordinator $E_t$.

Let $p_{{X(s)}}(x)$ and $f_{E_t}(s)$ be the density of $X(s)$ and $E_t$, respectively. Then the Fourier transform of $p_{{X(s)}}(x)$ is given by
\begin{equation}\label{Eq:F_pX}
\widehat{p_{X(s)}}(\xi)=e^{-s\nu|\xi|^\a},
\end{equation}
and
\begin{equation}\label{Etdens0}
f_t(x)=t\beta^{-1}x^{-1-1/\beta}g_\beta(tx^{-1/\beta}),
\end{equation}
where $g_\beta(\cdot)$ is the density function of $D_1.$ The function $g_\beta(u)$ [cf. Meerschaert and Straka (2013)] is infinitely differentiable on the entire real line, with $g_\beta(u)=0$ for $u\le 0$.

By conditioning, we have
\begin{equation}\label{Eq:Green1}
G_t(x)=\int_{0}^\infty p_{_{X(s)}}(x) f_{E_t}(s)ds.
\end{equation}

 A related  time-fractional  SPDE was studied by Chen  et al. \cite{chen-kim-kim-2014}. They have proved existence, uniqueness and regularity of the solutions to the time-fractional parabolic type SPDEs using cylindrical Brownian motion in Banach spaces, in line with the methods in \cite{daPrato-Zabczyk}.
The existence and uniqueness of the solution to (\ref{tfspde}) has been studied by Nane et al \cite{nane-et-al-2014} under Global Lipchitz conditions on $\sigma$, using the White noise  approach of \cite{walsh}:
We say that
an $\mathcal{F}_t$-adapted random field $\{u(t,x),\,t\ge 0,\,x\in\R^d\}$ is said to be a mild solution of (\ref{tfspde}) with initial value $u_0$ if the following integral equation is fulfilled
\begin{equation}\label{Eq:Mild}
u_t(x)=\int_{\R^d}u_0(y)G_t(x-y)dy+\int_0^t\int_{\R^d}\sigma(u_r(y))G_{t-r}(x-y)W(drdy).
\end{equation}

Let $T$ be a fixed positive number, and let $B_{T,p}$ denote the family of all $\mathcal{F}_t$-adapted random fields $\{u_t(x),\,t\in [0,\,T],\,x\in\R^d\}$ satisfying
\begin{equation}\label{Eq:SquareInt}
\sup_{x\in\R^d}\sup_{t\in[0,\,T]}\E\left[|u_t(x)|^p\right]<\infty,
\end{equation}
with the convention that $B_{T,2}=B_T$. It is easy to check that for each fixed $T$ and $p,$ $B_{T,p}$ is a Banach space.

Nane et al. \cite{nane-et-al-2014} proved the existence and uniqueness result  for the equation \eqref{tfspde} when
 $d<\min\{2,\beta^{-1}\}\a$,  equation (\ref{tfspde}) subject to (\ref{Eq:Initial}) and global Lipschitz condition on $\sigma$ has an a.s.-unique solution $u_t(x)$ that satisfies that for all $T>0,$
$u_t(x)\in B_{T,p}.$
For a comparison of the two approaches to SPDE's see the paper by Dalang and Quer-Sardanyons \cite{Dalang-Quer-Sardanyons}.


In this paper we  study the intermittency behavior of the  solution of the time fractional spde \eqref{tfspde}. We adopt the definition given in \cite[Chapter 7]{khoshnevisan-cbms}:
 The random field $u_t(x)$
is called  weakly intermittent if $\inf_{z\in \rd}|u_0(z)|>0$, and
$\eta_k(x)/k$ is strictly increasing for $k\geq 2$ for all $x\in \rd$, where
\begin{equation}\label{lyapunov-x}
\eta_k(x):=\liminf_{t\to\infty}\frac1t\log\E(|u_t(x)|^k).
\end{equation}
 There is a huge literature on the study of intermittency of SPDEs, see, for example, \cite{foondun-khoshnevisan-09, khoshnevisan-cbms} and the reference therein.

Next we consider the solution $u$ to the time fractional stochastic heat equation \eqref{tfspde} when $d=1$ and $\alpha=2$.  We extend the  results  on the stochastic heat equation corresponding to $\beta =1$ in \cite{conus-khoshnevisan, khoshnevisan-cbms} to the time fractional stochastic heat equation. Assume that $\inf_{x\in \RR{R}}u_0(x)=0$. To be more precise,  choose a measurable initial function $u_0:\RR{R}\to\RR{R}_+$ that is bounded, has compact support, and is strictly positive in an open subinterval of $(0,\infty)$. We also assume that $\sigma$  satisfies
 $\sigma(0)=0.$

 It turns out that the preceding assumptions imply that the solution develops tall peaks over time which means that $t\to \sup_{x\in \RR{R}} \E{|u_t(x)|^2}$ grows exponentially rapidly with $t$. There appears another phenomena called intermittency fronts that the distances of the farthest peaks of the moments of the solution to \eqref{tfspde} grow  linearly with time as $\theta t$: if $\theta$ is sufficiently small, then the quantity $\sup_{|x|>\theta t} \E{|u_t(x)|^2}$ grows exponentially quickly as $t\to\infty$; whereas the preceding quantity vanishes exponentially rapidly if $\theta$ is sufficiently large.
Thus, it makes sense to consider, for every $\theta\geq 0$,
\begin{equation}\label{intermittecy-front-L}
\mathscr{L}(\theta):= \limsup_{t\rightarrow \infty}\frac{1}{t}\sup_{|x|>\theta t}\log \E\left(|u_t(x)|^2\right).
\end{equation}
We can think of $\theta_L>0$ as an \textit{intermittency lower front} if $\mathscr{L}(\theta)<0$ for all $\theta > \theta_L,$ and of $\theta_U>0$ as an \textit{intermittency upper front} if $\mathscr{L}(\theta)>0$ whenever $\theta<\theta_U.$

We obtain  bounds for $\theta_L$ and $\theta_U$ that extends the results of \cite{conus-khoshnevisan} to the case of time fractional SPDEs with crucial nontrivial changes to the methods in \cite{conus-khoshnevisan, khoshnevisan-cbms}.

Next we want to give an outline of the paper. We give some preliminary results in section 2. The intermittency for the solution of  \eqref{tfspde} will be proved in section 3, the main result is  Theorem \ref{intermittency-thm}. In section 4 we prove the  bounds for the intermittency fronts  the solution of  \eqref{tfspde}. The main result is Theorem  \ref{maintheoremtwo}.

\section{Preliminaries}

Applying the Laplace transform with respect to time variable t, Fourier transform with respect to space variable x.
 Laplace-Fourier transform of $G$ is given by
\begin{eqnarray}
\int_{0}^{\infty}\int_{{\R^d}} e^{-\lambda t + i\xi\cdot x}G_t(x)dxdt&=&\int_{0}^{\infty}e^{-\lambda t}dt\int_{0}^\infty f_{E_t}(s)ds\int_{{\R^d}} e^{i\xi\cdot x}p_{X(s)}(x)dx\nonumber\\
&=&\int_{0}^{\infty}e^{-s\nu|\xi|^\alpha}ds\int_{0}^\infty e^{-\lambda t}f_{E_t}(s)dt\nonumber\\
&=&\frac{\lambda^\beta}{\lambda}\int_{0}^\infty e^{-s(\nu|\xi|^\alpha +\lambda^\beta) }ds\nonumber\\
&=&\frac{\lambda^{\beta-1}}{\lambda^{\beta} + \nu|\xi|^\alpha}.
\end{eqnarray}
here we used the fact that the laplace transform $t\to\lambda $ of $f_{E_t}(u)$ is given by $\lambda^{\beta-1}e^{-u\lambda^\beta}$.
Using the convention, $\sim$ to denote the Laplace transform and $\ast$ the Fourier transform we get
\begin{equation}
\tilde{G}^\ast_t(x) = \frac{\lambda^{\beta-1}}{\lambda^{\beta} +\nu |\xi|^\alpha}.
\end{equation}
Inverting the Laplace transform, it yields
\begin{equation}\label{fouriertransformofG}
G^\ast_t(\xi) = E_\beta(-\nu|\xi|^\alpha t^\beta),
\end{equation}
where
\begin{equation}\label{ML-function}
E_\beta(x) = \sum_{k=0}^\infty\frac{x^k}{\Gamma(1+\beta k)}
\end{equation}
 is the Mittag-Leffler function. In order to invert the Fourier transform, we will make use of the integral \cite[eq. 12.9]{haubold-mathai-saxena}
\begin{equation}
\int_0^\infty\cos(ks)E_{\beta,\a}(-as^\mu)ds = \frac{\pi}{k}H_{3,3}^{2,1}\bigg[\frac{k^\mu}{a}\bigg|^{(1,1), (\a,\beta), (1,\mu/2)}_{(1,\mu),(1,1),(1,\mu/2)}\bigg],\nonumber
\end{equation}
where $\mathcal{R}(\a)>0,\mathcal{\beta}>0,k>0,a>0, H_{p,q}^{m,n}$ is the H-function given in \cite[Definition 1.9.1, p. 55]{mathai} and the formula
\begin{equation}
\frac{1}{2\pi}\int_{-\infty}^\infty e^{-i\xi x}f(\xi)d\xi = \frac{1}{\pi}\int_0^\infty f(\xi)\cos(\xi x)d\xi.\nonumber
\end{equation}
Then this gives the function as
\begin{equation}\label{G-function}
G_t(x) = \frac{1}{|x|} H_{3,3}^{2,1}\bigg[\frac{|x|^\a}{\nu t^\beta}\bigg|^{(1,1), (1,\beta), (1,\a/2)}_{(1,\a),(1,1),(1,\a/2)}\bigg].
\end{equation}
Note that for $\a = 2$ using reduction formula for the H-function we have
\begin{equation}
G_t(x) = \frac{1}{|x|}H^{1,0}_{1,1}\bigg[\frac{|x|^2}{\nu t^\beta}\bigg|^{(1,\beta)}_{(1,2)}\bigg]
\end{equation}
Note  that for $\beta = 1$ it reduces to the Gaussian density
\begin{equation}
G_t(x) = \frac{1}{(4\nu\pi t)^{1/2}}\exp\left(-\frac{|x|^2}{4\nu t}\right).
\end{equation}
Recall
 uniform estimate of Mittag-Leffler function \cite[Theorem 4]{simon}
 \begin{equation}\label{uniformbound}
 \frac{1}{1 + \Gamma(1-\beta)x}\leq E_{\beta}(-x)\leq \frac{1}{1+\Gamma(1+\beta)^{-1}x} \ \  \ \text{for}\ x>0.
 \end{equation}
\begin{lem}\label{Lem:Green1} For $d < 2\a,$
\begin{equation}\label{Eq:Greenint}
\int_{{\R^d}}G^2_t(x)dx  =C^\ast t^{-\beta d/\a}
\end{equation}
where $C^\ast = \frac{(\nu )^{-d/\a}2\pi^{d/2}}{\a\Gamma(\frac d2)}\frac{1}{(2\pi)^d}\int_0^\infty z^{d/\a-1} (E_\beta(-z))^2 dz.$
\end{lem}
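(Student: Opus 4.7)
The plan is to compute the $L^2$ norm of $G_t$ via Plancherel's theorem, exploiting the explicit Fourier transform \eqref{fouriertransformofG}, and then reduce to a scale-invariant one-dimensional integral by polar coordinates and a suitable change of variables.

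First, by Plancherel's identity and the fact that $G_t^\ast(\xi)=E_\beta(-\nu|\xi|^\alpha t^\beta)$ is real-valued and radial in $\xi$, we have
\begin{equation*}
\int_{\R^d}G_t^2(x)\,dx=\frac{1}{(2\pi)^d}\int_{\R^d}\bigl(E_\beta(-\nu|\xi|^\alpha t^\beta)\bigr)^2\,d\xi
=\frac{1}{(2\pi)^d}\cdot\frac{2\pi^{d/2}}{\Gamma(d/2)}\int_0^\infty r^{d-1}\bigl(E_\beta(-\nu r^\alpha t^\beta)\bigr)^2\,dr.
\end{equation*}
The second equality uses the surface area $2\pi^{d/2}/\Gamma(d/2)$ of the unit sphere in $\R^d$.

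Next I would perform the substitution $z=\nu r^\alpha t^\beta$, so that $r=(z/(\nu t^\beta))^{1/\alpha}$ and
\begin{equation*}
r^{d-1}\,dr=\frac{1}{\alpha}(\nu t^\beta)^{-d/\alpha}z^{d/\alpha-1}\,dz.
\end{equation*}
Plugging this back yields
\begin{equation*}
\int_{\R^d}G_t^2(x)\,dx=\frac{1}{(2\pi)^d}\cdot\frac{2\pi^{d/2}}{\alpha\,\Gamma(d/2)}(\nu)^{-d/\alpha}t^{-\beta d/\alpha}\int_0^\infty z^{d/\alpha-1}\bigl(E_\beta(-z)\bigr)^2\,dz,
\end{equation*}
which, upon identifying $C^\ast$, is precisely the desired identity.

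The only remaining point, and the sole place where the hypothesis $d<2\alpha$ enters, is verifying that the constant
\begin{equation*}
\int_0^\infty z^{d/\alpha-1}\bigl(E_\beta(-z)\bigr)^2\,dz
\end{equation*}
is finite. Near $z=0$, $E_\beta(-z)$ is bounded, so the integrand behaves like $z^{d/\alpha-1}$, which is integrable since $d>0$. Near infinity I would invoke the uniform Mittag-Leffler bound \eqref{uniformbound}, which gives $E_\beta(-z)\le \Gamma(1+\beta)/z$ for large $z$, hence the integrand decays like $z^{d/\alpha-3}$; integrability at infinity is therefore equivalent to $d/\alpha-3<-1$, that is, $d<2\alpha$. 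No step is genuinely difficult here, but the convergence check at infinity, handled by the Simon-type bound \eqref{uniformbound}, is the one place where care is required.
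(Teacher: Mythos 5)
Your proposal is correct and follows essentially the same route as the paper: Plancherel with the explicit Fourier transform \eqref{fouriertransformofG}, polar coordinates, the substitution $z=\nu r^\alpha t^\beta$ to pull out the factor $t^{-\beta d/\alpha}$, and the Mittag--Leffler bound \eqref{uniformbound} to control the remaining one-dimensional integral. The only cosmetic difference is that the paper uses the two-sided bound to evaluate the dominating integral as a Beta function $\mathrm{B}(d/\alpha,\,2-d/\alpha)$ (showing convergence if and only if $d<2\alpha$), whereas you verify finiteness by splitting at $z=1$; both are equivalent.
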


\begin{proof}
Using Plancherel theorem and \eqref{fouriertransformofG}, we have

\begin{eqnarray}
\int_{\rd}|G_t(x)|^2 dx &=& \frac{1}{(2\pi)^d}\int_{\rd}|G^\ast_t(\xi)|^2 d\xi= \frac{1}{(2\pi)^d}\int_{\rd}|E_\beta(-\nu|\xi|^\a t^\beta)|^2 d\xi\nonumber\\
&=&\frac{2\pi^{d/2}}{\Gamma(\frac d2)}\frac{1}{(2\pi)^d}\int_0^\infty r^{d-1} (E_\beta(-\nu r^\a t^\beta))^2 dr.\label{square-kernel}\\
&=&\frac{(\nu t^\beta)^{-d/\a}2\pi^{d/2}}{\a\Gamma(\frac d2)}\frac{1}{(2\pi)^d}\int_0^\infty z^{d/\a-1} (E_\beta(-z))^2 dz.
\end{eqnarray}
We used the integration in  polar coordinates for radially symmetric function in  the last equation above. Now using equation \eqref{uniformbound} we get
\begin{eqnarray}\label{uniformboundforE}
\int_{0}^\infty\frac{z^{d/\a-1} }{(1+\Gamma(1-\beta)z)^2} dr&\leq&\int_0^\infty z^{d/\a-1} (E_\beta(-z))^2 dz\nonumber\\
&\leq&\int_{0}^\infty\frac{z^{d/\a-1} }{(1+\Gamma(1+\beta)^{-1}z)^2}dz
\end{eqnarray}
Hence  $\int_0^\infty z^{d/\a-1} (E_\beta(-z))^2 dz<\infty$ if and only if $d<2\a$. In this case
 the upper bound in equation \eqref{uniformboundforE} is
$$\int_0^\infty \frac{z^{d/\a-1} }{(1+\Gamma(1+\beta)^{-1}z)^2} dz = \frac{\text{B}(d/\a, 2-d/\a)}{\Gamma(1+\beta)^{-d/\a}},$$
where $\text{B}(d/\a, 2-d/\a)$ is a Beta function.
\end{proof}
\begin{rk}
For  special case $d = 1, \a = 2$ and $\beta = 1$ we get
\begin{equation}\label{specialcase}
\int_{-\infty}^{\infty}G_t(x)^2 dx = \frac{1}{(8\nu\pi t)^{1/2}}.
\end{equation}
\end{rk}
\begin{lem} For $\lambda\in\rd$ and $\a = 2$,
$$\int_{\rd}e^{\lambda\cdot x}G_s(x)dx = E_\beta(\nu |\lambda|^2 s^\beta).$$
\end{lem}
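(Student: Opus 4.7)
I would start from the subordination representation \eqref{Eq:Green1} of $G_s$, namely $G_s(x)=\int_0^\infty p_{X(u)}(x)\,f_{E_s}(u)\,du$, and compute the exponential integral by integrating out $x$ first. Since the integrand is nonnegative, Fubini--Tonelli gives
\begin{equation*}
\int_{\rd}e^{\lambda\cdot x}G_s(x)\,dx=\int_0^\infty f_{E_s}(u)\left(\int_{\rd}e^{\lambda\cdot x}p_{X(u)}(x)\,dx\right)du.
\end{equation*}
For $\a=2$, the inner integral is the moment generating function of a centered Gaussian whose characteristic function is $e^{-u\nu|\xi|^2}$ by \eqref{Eq:F_pX}; analytically continuing $\xi\mapsto -i\lambda$ (or a direct Gaussian computation) yields $\int_{\rd}e^{\lambda\cdot x}p_{X(u)}(x)\,dx=e^{u\nu|\lambda|^2}$.

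The remaining task is to identify $\E[e^{\nu|\lambda|^2 E_s}]=\int_0^\infty e^{u\nu|\lambda|^2}f_{E_s}(u)\,du$ with the Mittag-Leffler function $E_\beta(\nu|\lambda|^2 s^\beta)$. Starting from the standard Laplace transform $\E[e^{-rE_s}]=E_\beta(-rs^\beta)$ (recalled already in the preliminaries via the Laplace transform $\lambda^{\beta-1}e^{-u\lambda^\beta}$ of $f_{E_t}(u)$) and expanding both sides as power series in $r$ using the definition \eqref{ML-function} of $E_\beta$, matching coefficients produces the moment formula $\E[E_s^k]=k!\,s^{\beta k}/\Gamma(1+\beta k)$. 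Then, expanding the exponential and swapping sum and expectation by monotone convergence,
\begin{equation*}
\E\bigl[e^{\nu|\lambda|^2 E_s}\bigr]=\sum_{k=0}^{\infty}\frac{(\nu|\lambda|^2)^k\,\E[E_s^k]}{k!}=\sum_{k=0}^{\infty}\frac{(\nu|\lambda|^2 s^\beta)^k}{\Gamma(1+\beta k)}=E_\beta(\nu|\lambda|^2 s^\beta),
\end{equation*}
which is precisely the claim.

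The main obstacle is convergence: the Gaussian moment generating function $e^{u\nu|\lambda|^2}$ grows exponentially in $u$ while $f_{E_s}$ decays only polynomially at infinity, so it is not immediately clear that $E_s$ possesses exponential moments. This is rescued by the fact that $E_\beta$ is entire, so the final Mittag-Leffler series converges absolutely for every $\lambda\in\rd$; reading the chain of equalities backwards, this finiteness simultaneously justifies the monotone-convergence interchange and the initial Fubini--Tonelli application.
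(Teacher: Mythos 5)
Your proposal is correct and follows essentially the same route as the paper's proof: subordination via $f_{E_s}$, the Gaussian moment generating function $e^{u\nu|\lambda|^2}$, the moment formula $\E[E_s^k]=k!\,s^{\beta k}/\Gamma(1+\beta k)$, and resummation into the Mittag--Leffler series. Your added remark justifying the interchanges via Tonelli and the entirety of $E_\beta$ is a welcome refinement of a point the paper leaves implicit.
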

\begin{proof}
Using uniqueness of Laplace transform we can easily show $\E[E_s^k] = \frac{\Gamma(1+k)s^{\beta k}}{\Gamma(1+\beta k)}$ for $k>-1$. Therefore,
using this and moment-generating function of Gaussian densities we have
\begin{eqnarray}
\int_{\rd}e^{\lambda\cdot x}G_s(x)dx &=& \int_{0}^\infty \int_{\rd}e^{\lambda\cdot x}\frac{e^{-\frac{|x|^2}{4\nu u}}}{(4\pi\nu u)^{d/2}}dxf_{E_s}(u)du\nonumber\\
&=& \int_{0}^\infty e^{\nu |\lambda|^2 u}f_{E_s}(u)du\nonumber\\
&=&\sum_{k=0}^{\infty}\frac{\nu^k|\lambda|^{2k}}{k!}\int_0^\infty u^kf_{E_s}(u)du\nonumber\\
&=& \sum_{k=0}^{\infty}\frac{\nu^k|\lambda|^{2k}}{k!}\frac{\Gamma(1+k)s^{\beta k}}{\Gamma(1+\beta k)}.
\end{eqnarray}

\end{proof}

$\stackrel{\cdot}{W}(t,x)$ is a space-time white noise  with $x\in \R^d$, which is assumed to be adapted with respect to a filtered probability space $(\Omega, \mathcal{F},  \mathcal{F}_t, \P)$, where $ \mathcal{F}$ is complete and the filtration $\{\mathcal{F}_t, t\geq 0\}$ is right continuous. $\stackrel{\cdot}{W}(t,x)$ is  generalized processes with covariance given by
$$
\E\bigg[\stackrel{\cdot}{W}(t,x) \stackrel{\cdot}{W}(s,y)\bigg]=\delta(x-y)\delta(t-s).
$$
That is, $W(f) $ is a random field indexed by functions $ f\in L^2((0,\infty)\times \R^d )$ and for all $f,g\in L^2((0,\infty)\times \R ^d)$, we have
$$
\E\bigg[W(f)W(g)\bigg]=\int_0^\infty \int_{\R^d} f(t,x)g(t,x)dxdt.
$$

Hence $W(f)$ can be represented as
$$
W(f)=\int_0^\infty \int_{\R^d} f(t,x)W(dxdt).
$$
Note that $W(f)$ is $\mathcal{F}_t$-measurable whenever $f$ is supported on $[0,t]\times\R^d$.

Let $\Phi$ be a random field, and for every $\gamma>0$ and $k\in [2, \infty)$ define
\begin{equation}
\mathcal{N}_{\gamma,k}(\Phi) := \sup_{t\geq 0}\sup_{x\in\R^d}\left(e^{-\gamma t}||\Phi_t(x)||_k\right):= \sup_{t\geq 0}\sup_{x\in\R^d}\left(e^{-\gamma t}\bigg[\E|\Phi_t(x)|^k\bigg]^{1/k}\right).
\end{equation}
If we identify a.s.-equal random fields, then every $\mathcal{N}_{\gamma, k} $ becomes a norm. Moreover, $\mathcal{N}_{\gamma, k} $ and $\mathcal{N}_{\gamma', k} $ are equivalent norms for all $\gamma, \gamma'>0$ and $k\in [2,\infty).$ Finally, we note that if $\mathcal{N}_{\gamma, k}(\Phi) <\infty$ for some $\gamma >0$ and $k\in[2,\infty)$, then $\mathcal{N}_{\gamma, 2}<\infty$  as well, thanks to Jensen's inequality.

\begin{defin}
We denote by $\mathcal{L}^{\gamma, 2}$  the completion of the space of all simple  random fields in the norm $\mathcal{N}_{\gamma, 2}.$
\end{defin}

Given a random field $\Phi := \{\Phi_t(x)\}_{t\geq 0, x\in\R^d}$ and space-time noise $W$, we define the [space-time] \textit{stochastic convolution} $G\circledast \Phi$ to be the random field that is defined as
$$(G\circledast \Phi)_t(x) := \int_{(0,t)\times \R^d} G_{t-s}(y-x)\Phi_s(y)W(dsdy),$$
for $t>0$ and $x\in\R^d,$ and $(G\circledast W)_0(x) := 0.$

Define
\begin{equation}
G_s^{(t,x)}(y) := G_{t-s}(y-x)\cdot {\bf{1}}_{(0,t)}(s)\ \ \ \mbox{for all}\ s\geq 0\ \mbox{and}\ y\in\R^d.
\end{equation}
Clearly, $G^{(t,x)}\in L^2(\R_+\times \R^d)$ for $\beta d/\a<1$; in fact,
$$\int_0^\infty ds \int_{\R^d} [G_s^{(t,x)}(y)]^2dy =  \int_0^t ds \int_{\R^d} [G_s(y)]^2dy = C^\ast t^{1-\beta d/\a}<\infty.$$
This computation follows from Lemma \ref{Lem:Green1}. Thus, we may interpret the random variable $(G\circledast \Phi)_t(x)$ as the stochastic integral $\int G_s^{(t,x)}\Phi dW$, provided that $\Phi$ is in $\mathcal{L}^{\gamma, 2}$ for some $\gamma>0.$ Let us recall that $\Phi\mapsto G\circledast \Phi$ is a random linear map; that is, if $\Phi, \Psi
\in \mathcal{L}^{\gamma, 2}$ for some $\gamma > 0,$ then for all $a, b\in\R$ the following holds almost surely:
\begin{eqnarray}
&&\int_{(0,t)\times \R^d} G_{t-s}(y-x)(a\Phi_s(y) + b\Psi(y))W(dsdy)\nonumber\\
&=&a\int_{(0,t)\times \R^d} G_{t-s}(y-x)\Phi_s(y)W(dsdy) + b\int_{(0,t)\times \R^d} G_{t-s}(y-x)\Psi_s(y)W(dsdy)\nonumber
\end{eqnarray}
\section{Intermittency}\label{intermittency}

To motivate the study of intermittency we include a part of the discussion in Khoshnevisan \cite[Section 7.1]{khoshnevisan-cbms}.
 Let $\{\psi_t(x)\}_{t\geq 0, x\in\rd}$ be a non-negative random field that is stationary in the $x$ parameter. We will refer to the function
 $$
 \eta(k):=\lim_{t\to\infty} \frac1t\log\bigg(\E[\psi_t(x)]^k\bigg)
 $$
 as the Lyapunov exponent of $\psi$ provided that $\eta(k)<\infty$ for all real numbers $k\geq 1$.

 For the sake of simplicity, we assume also that
 $$
 \E[\psi_t(x)]=1\ \ \mathrm{for\ all}\ t\geq 0\ \mathrm{and}\ x\in \rd.
 $$

 The following definition is more or less standard terminology; see Khoshnevisan \cite{khoshnevisan-cbms} and references therein:
 We say that the random field $\psi$ is intermittent when its Lyapunov exponent $\eta$ has the property that $k\to k^{-1}\eta(k) $ is strictly increasing on $[2,\infty)$

  Jensen's inequality ensures that the function $k\to(\E[\psi_t(x)]^k)^{1/k}=||\psi_t(x)||_k $ is always nondecreasing, this implies that $k\to k^{-1}\eta(k) $ is also nondecreasing. Thus, $\psi$ is intermittent if and only if ``nondecreasing'' is replaced  by ``strictly increasing''.

  The following observation of Carmona and Molchanov \cite[Theorem 3.1.2]{carmona-molchanov} gives a sufficient condition for intermittency: see \cite[Proposition 7.2]{khoshnevisan-cbms} for a proof of the next proposition.

  \begin{prop}
  \label{intermittence-sufficient}
  If $\eta(k)<\infty$ for all sufficiently  large $k$, then the function $\eta$ is well-defined and convex on $(0,\infty)$. Moreover, If $\eta(k_0)>0$ for some $k_0>1$, then $k\to k^{-1}\eta(k) $ is strictly increasing on $[k_0,\infty)$
  \end{prop}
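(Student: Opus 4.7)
The plan is to obtain convexity by applying Hölder's inequality pointwise in $t$, passing it through the $t\to\infty$ limit, and then deducing the strict monotonicity of $k\mapsto k^{-1}\eta(k)$ by a short secant-slope argument using the normalization $\eta(1)=0$.

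First I would record that the hypothesis $\E[\psi_t(x)]=1$ forces $\eta(1)=0$. For any fixed $t,x$ and any $k_1,k_2>0$, $\theta\in(0,1)$, Hölder's inequality with conjugate exponents $1/\theta,\,1/(1-\theta)$ applied to $\psi_t(x)^{\theta k_1}\cdot\psi_t(x)^{(1-\theta)k_2}$ yields
$$
\E\!\left[\psi_t(x)^{\theta k_1+(1-\theta)k_2}\right]\;\leq\;\bigl(\E[\psi_t(x)^{k_1}]\bigr)^{\theta}\bigl(\E[\psi_t(x)^{k_2}]\bigr)^{1-\theta}.
$$
Taking logarithms and dividing by $t$ shows that $k\mapsto t^{-1}\log\E[\psi_t(x)^k]$ is convex; passing to the limit in $t$ (or to $\limsup$/$\liminf$, since all three preserve convexity under pointwise convergence) gives convexity of $\eta$. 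Because $\eta(1)=0$ is finite and $\eta(k)<\infty$ is assumed for all large $k$, convexity propagates finiteness to every intermediate $k$ and to every $k\ge 1$, so $\eta$ is well-defined and convex on $(0,\infty)$.

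Next, assume $\eta(k_0)>0$ for some $k_0>1$, and fix $k_0\le k_2<k_1$. Writing $k_2=\lambda\cdot 1+(1-\lambda)k_1$ with $\lambda=(k_1-k_2)/(k_1-1)\in(0,1)$, convexity together with $\eta(1)=0$ gives
$$
\eta(k_2)\;\leq\;\frac{k_2-1}{k_1-1}\,\eta(k_1).
$$
Convexity also implies that the secant slope $(\eta(k)-\eta(1))/(k-1)$ is nondecreasing on $(1,\infty)$; this slope is strictly positive at $k_0$ because $\eta(k_0)>0$, so $\eta$ is strictly increasing on $[k_0,\infty)$ and in particular $\eta(k_1)>0$. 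Dividing the displayed inequality by $k_2\eta(k_1)>0$ and using the elementary strict inequality $(k_2-1)/(k_1-1)<k_2/k_1$ (equivalent to $k_1>k_2$), I obtain
$$
\frac{\eta(k_2)}{k_2}\;\leq\;\frac{k_2-1}{k_1-1}\cdot\frac{\eta(k_1)}{k_2}\;<\;\frac{\eta(k_1)}{k_1},
$$
which is exactly strict monotonicity of $k\mapsto k^{-1}\eta(k)$ on $[k_0,\infty)$.

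The one delicate point worth flagging is the passage from pointwise log-convexity of $\E[\psi_t^k]$ to convexity of the limit $\eta$: this is routine when one observes that pointwise limits (or limsups, or liminfs) of convex functions on an interval remain convex, and that a convex function which is finite at an interior point of its effective domain is finite on the whole interior. Once convexity and finiteness are established, the strictness in the second half is purely convex-analytic, with no further input from the random field $\psi$.
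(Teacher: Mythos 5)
Your argument is correct and coincides with the standard proof (H\"older's inequality gives log-convexity in $k$ of $t^{-1}\log\E[\psi_t(x)^k]$ for each fixed $t$, the normalization $\E[\psi_t(x)]=1$ gives $\eta(1)=0$, and the secant-slope comparison through the point $(1,0)$ finishes); the paper does not prove this proposition itself but defers to Carmona--Molchanov and to Khoshnevisan [Proposition 7.2], where exactly this route is taken. The one imprecision is the parenthetical claim that $\liminf$ preserves convexity (it does not in general, unlike $\lim$ and $\limsup$), but this is harmless here because the two convexity inequalities you actually invoke both have $k=1$ as an endpoint, where the prelimit quantity vanishes identically, so they survive the $\liminf$.
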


    Proposition \ref{intermittence-sufficient} says that we need to show $\eta(2)>0$ in order to prove that $\psi $ is intermittent.

    The discussion in Khoshnevisan \cite[Section 7.1]{khoshnevisan-cbms}  gives an implication of the intermittency for the random field $\psi$:
    There is a non-random strictly increasing, strictly positive sequence $\{\theta_j\}_{j=1}^\infty$ such that
    \begin{eqnarray}
    &0<\limsup_{N\to\infty}\frac1N \max_{1\leq l\leq \exp(\theta_i N)}\log \psi_N(l)\nonumber\\
    \ \ \ \ \ \ \ \ \ \ \ \ \ \ \ \ \ \ \ \ \ \ \ \ \ \ \ &<\liminf_{N\to\infty}\frac1N \max_{1\leq l\leq \exp(\theta_{i} N)}\log \psi_N(l)<\infty
    \end{eqnarray}
    almost surely for every   $i\geq 1$. Hence, when $N$ is large, $x\to\psi_N(x)$ experiences increasingly-large peaks--on an exponential scale with $N$-- as $x$ grows on different scales with $N$.

\subsection{Intermittency and time fractional stochastic heat equation}

Recall the definition of $\eta_k(x)$ from \eqref{lyapunov-x}.  One can prove the following fact using the method of proof of Proposition \ref{intermittence-sufficient}: If $\eta_2(x)>0$ for all $x\in \rd$, then $\eta_k(x)/k$ is strictly increasing for $k\geq 2$ for all $x\in \rd$.

\begin{tm}\label{intermittency-thm}
Let  $d<\min\{2,\beta^{-1}\}\a$.
If $\inf_{z\in \rd}|u_0(z)|>0$, then $$\inf_{x\in \rd}\eta_2(x)\geq [C^\ast(L_\sigma)^2\Gamma(1-\beta d/\a)]^{\frac{1}{(1-\beta d/\a)}}$$
where
\begin{equation}\label{conecondition}
L_\sigma:=\inf_{z\in \rd}|\sigma(z)/z|.
\end{equation}
Therefore, the solution  $u_t(x)$ of \eqref{tfspde} is weakly intermittent when $\inf_{z\in \rd}|u_0(z)|>0$ and $L_\sigma>0$.
\end{tm}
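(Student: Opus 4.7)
The plan is to reduce the problem, via Walsh's $L^2$-isometry and Lemma \ref{Lem:Green1}, to a fractional Volterra inequality for the second moment, then iterate to get a Mittag-Leffler lower bound, and finally extract the exponential growth rate from the standard Mittag-Leffler asymptotics. First, apply the isometry to the mild formulation \eqref{Eq:Mild} to obtain
\begin{equation*}
\E|u_t(x)|^2 = |(G_t\ast u_0)(x)|^2 + \int_0^t\!\!\int_{\R^d} G_{t-s}(y-x)^2\,\E|\sigma(u_s(y))|^2\,dy\,ds.
\end{equation*}
Since $G_t$ is the density of $X(E_t)$ it integrates to $1$, so (taking $u_0\ge 0$, as is standard in this context) $(G_t\ast u_0)(x)\ge m := \inf_z u_0(z)>0$. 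Combining $|\sigma(z)|\ge L_\sigma|z|$ with Lemma \ref{Lem:Green1} and taking the infimum over $x$, the function $f(t):=\inf_{x\in\R^d}\E|u_t(x)|^2$ satisfies the fractional Volterra inequality
\begin{equation*}
f(t) \;\ge\; m^2 + L_\sigma^2\,C^\ast \int_0^t (t-s)^{-\beta d/\alpha}\, f(s)\,ds.
\end{equation*}

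Second, iterate. Set $\mu := 1-\beta d/\alpha\in(0,1)$ (well-defined because $d<\beta^{-1}\alpha$) and $a := L_\sigma^2\,C^\ast\,\Gamma(1-\beta d/\alpha)$. Picard iteration produces a Beta-integral cascade which collapses, by monotone convergence, into the Mittag-Leffler series $f(t)\ge m^2\,E_\mu(a\,t^\mu)$; one can verify this instantly by Laplace transform, since the Laplace transform of $t\mapsto E_\mu(a t^\mu)$ is $\lambda^{\mu-1}/(\lambda^\mu-a)$, which is exactly the resolvent of the fractional convolution operator on the right-hand side. Applying the classical asymptotic $E_\mu(z)\sim \mu^{-1}\exp(z^{1/\mu})$ as $z\to+\infty$ for $\mu\in(0,1)$ then yields
\begin{equation*}
\liminf_{t\to\infty}\tfrac{1}{t}\log f(t)\;\ge\; a^{1/\mu} \;=\; [C^\ast L_\sigma^2\,\Gamma(1-\beta d/\alpha)]^{1/(1-\beta d/\alpha)},
\end{equation*}
which is the claimed lower bound on $\inf_x\eta_2(x)$. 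Weak intermittency then follows from the remark preceding the theorem: $\eta_2(x)>0$ combined with Proposition \ref{intermittence-sufficient} forces $k\mapsto k^{-1}\eta_k(x)$ to be strictly increasing on $[2,\infty)$ for every $x$.

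I do not expect serious difficulty with the isometry step or the Mittag-Leffler asymptotics, both of which are classical. The genuine technical point requiring care is justifying that the iterated Beta-integrals really collapse into the Mittag-Leffler series; monotone convergence handles this provided $f(t)$ is finite on compacts, which is ensured by the existence theorem of Nane et al.\ \cite{nane-et-al-2014}. A secondary conceptual subtlety is that the argument implicitly takes $u_0\ge 0$ so that $G_t\ast u_0$ inherits the lower bound $m$; the hypothesis $\inf_z|u_0(z)|>0$ is the standard shorthand for this in the intermittency literature.
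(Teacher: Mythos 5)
Your proposal is correct, and it reaches the same Volterra inequality
$f(t)\ge m^2+L_\sigma^2C^\ast\int_0^t(t-s)^{-\beta d/\a}f(s)\,ds$
for $f(t)=\inf_{x}\E(|u_t(x)|^2)$ by exactly the same first step (Walsh isometry, $|\sigma(z)|\ge L_\sigma|z|$, and Lemma \ref{Lem:Green1}); but from that point on you take a genuinely different route. The paper treats the inequality as saying that $f$ is a \emph{supersolution} of a renewal equation with kernel $g(\tau)=b\tau^{-\theta}$, exponentially tilts the kernel into a probability density, invokes the classical renewal theorem (Theorem \ref{theorem2}) for the exact limit $\lim_{t\to\infty}e^{-ct}f_{\mathrm{ren}}(t)$, and then needs the comparison Theorem \ref{sup-sub-solution} (proved by Picard iteration) to transfer that growth to the supersolution $f$. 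You instead iterate the inequality directly: the $n$-fold convolutions of $t^{-\theta}$ produce the Beta-integral cascade $\Gamma(1-\theta)^nt^{n(1-\theta)}/\Gamma(1+n(1-\theta))$, the nonnegative remainder can be discarded at each stage, and the series sums to $m^2E_\mu(at^\mu)$ with $\mu=1-\beta d/\a$ and $a=C^\ast L_\sigma^2\Gamma(1-\beta d/\a)$; the classical asymptotic $E_\mu(z)\sim\mu^{-1}e^{z^{1/\mu}}$ then gives the same exponent $a^{1/\mu}$ as the paper's $c=(b\Gamma(1-\theta))^{1/(1-\theta)}$. Your argument is more self-contained (no tilting, no renewal theorem, no sub/supersolution comparison machinery) and fits naturally with the Mittag-Leffler functions already pervading the paper, at the cost of importing the exponential asymptotics of $E_\mu$; the paper's route buys, in addition, the sharp constant $\lim_{t\to\infty}e^{-ct}f(t)=a/(1-\theta)$ and a comparison theorem reused elsewhere. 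Two minor points, neither of which is a gap relative to the paper: the lower bound $(G_t\ast u_0)(x)\ge\inf_z|u_0(z)|$ requires $u_0$ of constant sign, an implicit assumption the paper makes identically; and dropping the remainder in the iteration requires $f$ finite on compacts, which you correctly source from the existence theorem of \cite{nane-et-al-2014}.
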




This theorem extends the results of \cite{foondun-khoshnevisan-09} to the time fractional stochastic heat type equations.
\begin{rk}
Recall the constant $C^*=const\cdot \nu^{-d/\a}$. Hence Theorem \ref{intermittency-thm} implies the so-called ``very fast dynamo property,'' $\lim_{\nu\to\infty}\inf_{x\in \rd}\eta_2(x)=\infty$. This property has been studied in fluid dynamics \cite{arponen-horvai-07, baxendale-rozovskii-93, galloway-03}.
\end{rk}
Theorem \ref{intermittency-thm} is proved by using  an  application of  non-linear renewal theory. Hence, we discuss such material briefly first, we will return to the proof of Theorem \ref{intermittency-thm} after this discussion.
\subsection{Renewal theory}

Consider the renewal equation:
\begin{equation}\label{renewaleqn}
f(t) = a(t) + \int_0^tf(s)g(t-s)ds \ \ \ (t>0).
\end{equation}
where $a :\R_+\rightarrow \R_+$ is measurable and non-decreasing, and
\begin{equation}\label{thegfunction}
g(\tau) := b/\tau^{\theta}\ \ \ \mbox{for all}\ \tau>0,
\end{equation}
for a positive and finite constant $b$ and $0<\theta < 1$. Of course, the function $f$ denotes the solution to the renewal equation \eqref{renewaleqn}, if indeed a solution exists.

For every measurable function $h :(0,\infty)\rightarrow \R_+,$ define the "titled version" $\tilde{h}$ of $h$ as
$$\tilde{h}(\tau) := e^{-c\tau}h(\tau)\ \ \ (\tau>0),$$
where $c = \left(b\Gamma(1-\theta)\right)^{1/(1-\theta)}.$

The key property of tilting is that $\tilde{g}$ is a probability density function on $(0,\infty)$, where $g$ is specifically the function defined in \eqref{thegfunction}. Furthermore, $f$ is a solution to \eqref{renewaleqn} if and only if $\tilde{f}$ solves the renewal equation
\begin{equation}\label{tilderenewaleqn}
\tilde{f}(t) = \tilde{a}(t) + \int_0^t\tilde{f}(t-s)\tilde{g}(s)ds\ \ \ (t>0).
\end{equation}

Since $\tilde{a}$ is a bounded measurable function and $\tilde{g}$ is a probability density function, classical renewal theory \cite[Chapter 9]{feller} tells us that \eqref{tilderenewaleqn} has a unique non-negative bounded solution $\tilde{f}$. Consequently, \eqref{renewaleqn} has a unique non-negative solution $f$ that grows at most exponentially; in fact, $f(t) = O(\exp(ct))$ as $t\rightarrow \infty.$ Finally, note that, because $\tilde{a}$ decreasing, hence "directly integrable," we may apply the renewal theorem-see for example Feller \cite[p. 363]{feller}-and deduce that
$$\lim_{t\rightarrow\infty}\tilde{f}(t) = \frac{\int_0^\infty\tilde{a}(y)dy}{\int_0^\infty y\tilde{g}(y)dy}=\frac{c}{1-\theta}\int_0^\infty a(y)e^{-cy}dy,$$
since $\int_0^\infty y\tilde{g}(y)dy = (1-\theta)/c.$ We can  summarize our findings as an  elementary theorem.
\begin{tm}\label{theorem2}
In the preceding setup, \eqref{renewaleqn} admits a unique solution $f$ subject to $\limsup_{t\rightarrow\infty}t^{-1}\log f(t) < \infty.$ Moreover,
$$\lim_{t\rightarrow\infty}e^{-ct}f(t) = \frac{c}{1-\theta}\int_0^\infty a(y)e^{-cy}dy,$$
where $c = \left(b\Gamma(1-\theta)\right)^{1/(1-\theta)}.$
\end{tm}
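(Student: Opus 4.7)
The plan is to carry out rigorously the tilting/renewal argument sketched in the paragraph preceding the theorem. Define the tilt $\tilde h(\tau):=e^{-c\tau}h(\tau)$ with $c=(b\Gamma(1-\theta))^{1/(1-\theta)}$. Multiplying \eqref{renewaleqn} by $e^{-ct}$ and noting that
\[
e^{-ct}\int_0^t f(s)g(t-s)\,ds=\int_0^t\tilde f(s)\,\tilde g(t-s)\,ds,
\]
shows that $f$ solves \eqref{renewaleqn} if and only if $\tilde f$ solves the tilted equation \eqref{tilderenewaleqn}. So it suffices to analyze the tilted equation.

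Next, I would verify that $\tilde g$ is a bona fide probability density. A direct Gamma computation gives
\[
\int_0^\infty \tilde g(s)\,ds = b\int_0^\infty s^{-\theta}e^{-cs}\,ds = b\,c^{\theta-1}\Gamma(1-\theta),
\]
and the choice $c^{1-\theta}=b\Gamma(1-\theta)$ makes this equal to $1$. The first moment of $\tilde g$ is computed the same way:
\[
\int_0^\infty y\,\tilde g(y)\,dy = b\,c^{\theta-2}\Gamma(2-\theta)=\frac{1-\theta}{c}.
\]
With $\tilde g$ a probability density and $\tilde a$ a bounded, non-negative, measurable function (bounded because $a$ is non-decreasing and is cut down by $e^{-ct}$ once $a$ is dominated by an exponential, which comes out of the construction a posteriori), classical Feller-type renewal theory \cite[Ch.~9]{feller} gives a unique non-negative bounded solution $\tilde f$ of \eqref{tilderenewaleqn}. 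Untilting, this is exactly the unique solution $f$ of \eqref{renewaleqn} with $\limsup_{t\to\infty}t^{-1}\log f(t)\le c<\infty$.

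To obtain the asymptotic, I would invoke the key renewal theorem (Feller, p.~363), which asserts
\[
\lim_{t\to\infty}\tilde f(t)=\frac{\int_0^\infty \tilde a(y)\,dy}{\int_0^\infty y\,\tilde g(y)\,dy}=\frac{c}{1-\theta}\int_0^\infty a(y)e^{-cy}\,dy,
\]
using the moment computation above. Undoing the tilt gives the claimed limit for $e^{-ct}f(t)$.

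\paragraph*{Main obstacle.} The delicate point is verifying the direct Riemann integrability (DRI) of $\tilde a$, which is the hypothesis needed for the key renewal theorem. Since $a$ is merely assumed non-negative, non-decreasing and measurable, one has to argue that $\tilde a(y)=e^{-cy}a(y)$ is DRI under an implicit sub-exponential growth assumption on $a$ (namely that $\int_0^\infty a(y)e^{-cy}\,dy<\infty$, which is implicit in the right-hand side of the stated limit). A clean way to handle this is: monotonicity of $a$ makes $\tilde a$ the product of a non-decreasing function with an exponentially decaying one, so $\tilde a$ has bounded variation on $[\delta,\infty)$ for every $\delta>0$, is bounded on $[0,\delta]$ by $a(\delta)$, and is Lebesgue integrable; these properties together imply direct Riemann integrability. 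Once DRI is established, everything else is routine.
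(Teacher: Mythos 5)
Your proposal is correct and follows essentially the same route as the paper: tilt by $e^{-ct}$ with $c=(b\Gamma(1-\theta))^{1/(1-\theta)}$, check via the Gamma integral that $\tilde g$ is a probability density with mean $(1-\theta)/c$, invoke classical renewal theory for the existence and uniqueness of a bounded solution of the tilted equation \eqref{tilderenewaleqn}, and conclude with the key renewal theorem. The one step where you diverge --- the direct Riemann integrability of $\tilde a$ --- is an artifact of a slip in the paper's setup: the intended hypothesis is that $a$ is non-\emph{increasing} (the paper's own argument asserts ``$\tilde a$ decreasing, hence directly integrable,'' the comparison theorem's proof uses $a(t)\le a(0)$, and the application takes $a$ constant), so $\tilde a$ is bounded, decreasing and integrable and DRI is immediate; your bounded-variation patch, together with the implicit requirement $\int_0^\infty a(y)e^{-cy}\,dy<\infty$, is what is genuinely needed if one reads ``non-decreasing'' literally, since without such a growth restriction the statement would fail.
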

\begin{defin}
We say that $h$ is a subsolution to \eqref{renewaleqn} if $h:(0,\infty)\rightarrow\R_+$ is measurable and $h(t)\leq a(t) + \int_0^t h(s)g(t-s)ds$ for all $t>0.$ We say that $h$ is a supersolution to \eqref{renewaleqn} if in addition to measurability $h$ satisfies $h(t)\geq a(t) + \int_0^th(s)g(t-s)ds$ for all $t>0.$
\end{defin}
Note, in particular, that if $h:(0,\infty)\rightarrow \R_+$ is a non-random  function then $\mathcal{N}_{\gamma,2}(h) = \sup_{t>0}(e^{-\gamma t}h(t))$ for every $\gamma > 0.$ Then we have the following comparison theorem for renewal equations.
\begin{tm}\label{sup-sub-solution}
Suppose $f$ solves \eqref{renewaleqn} and $F$ is a non-negative super solution to \eqref{renewaleqn} that satisfies $\mathcal{N}_{\gamma,2}(F)<\infty$ for some $\gamma > 0$. Then $F(t)\geq f(t)$ for all $t>0.$ Similarly, is $H$ is a subsolution to \eqref{renewaleqn} that satisfies
 $\mathcal{N}_{\gamma,2}(H)<\infty$ for some $\gamma > 0,$ then $H(t)\leq f(t)$ for all $t>0.$
 \end{tm}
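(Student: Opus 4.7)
My plan is to iterate the super- and sub-inequalities and combine with the Neumann-series representation of the genuine solution $f$. Observe first that, by the tilting argument in Theorem \ref{theorem2}, with $\tilde{g}$ a probability density on $(0,\infty)$, the unique admissible solution has the explicit form
\begin{equation*}
f(t) \;=\; \sum_{k=0}^{\infty} (a * g^{*k})(t),
\end{equation*}
where $g^{*k}$ denotes the $k$-fold convolution of $g$ with itself and $a * g^{*0} := a$; the series converges pointwise since, after tilting, it is the convolution of the bounded function $\tilde{a}$ against the locally finite renewal measure $\sum_k \tilde{g}^{*k}$ associated with the probability density $\tilde{g}$.

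For the supersolution, I would substitute $F \geq a + F * g$ into itself $n$ times to get
\begin{equation*}
F(t) \;\geq\; \sum_{k=0}^{n-1}(a * g^{*k})(t) + (F * g^{*n})(t) \;\geq\; \sum_{k=0}^{n-1}(a * g^{*k})(t),
\end{equation*}
where the second inequality uses only $F \geq 0$ and $g^{*n} \geq 0$. Letting $n \to \infty$ and invoking the series representation of $f$ gives $F(t) \geq f(t)$. Note that at this stage the growth hypothesis $\mathcal{N}_{\gamma,2}(F) < \infty$ is not actually needed; non-negativity of $F$ alone does the job.

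For the subsolution $H$, the same iteration yields
\begin{equation*}
H(t) \;\leq\; \sum_{k=0}^{n-1}(a * g^{*k})(t) + (H * g^{*n})(t),
\end{equation*}
and the entire problem reduces to showing that the remainder $(H * g^{*n})(t)$ vanishes as $n \to \infty$ for each fixed $t > 0$. Since $\mathcal{N}_{\gamma',2}(H) \leq \mathcal{N}_{\gamma,2}(H) < \infty$ for every $\gamma' \geq \gamma$, I may assume without loss of generality that $\gamma > c$, so that $H(s) \leq M e^{\gamma s}$ with $M := \mathcal{N}_{\gamma,2}(H)$. Then
\begin{equation*}
(H * g^{*n})(t) \;\leq\; M e^{\gamma t}\!\int_0^{\infty}\! e^{-\gamma s}\,g^{*n}(s)\,ds \;=\; M e^{\gamma t}\,(c/\gamma)^{n(1-\theta)},
\end{equation*}
where I use the elementary identity $\int_0^\infty e^{-\gamma s} g(s)\,ds = b\,\Gamma(1-\theta)\,\gamma^{\theta-1} = (c/\gamma)^{1-\theta}$, coming directly from the definition $c = (b\Gamma(1-\theta))^{1/(1-\theta)}$. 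Since $\gamma > c$, the right-hand side tends to $0$, which gives $H(t) \leq f(t)$.

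The main obstacle, and the only place where the growth hypothesis really matters, is this remainder-vanishing step in the subsolution case. The supersolution half is essentially automatic from positivity and monotone convergence, whereas the subsolution half genuinely requires both the $\mathcal{N}_{\gamma,2}$ bound and the algebraic observation that $(c/\gamma)^{1-\theta} < 1$ precisely when $\gamma$ exceeds the renewal exponent $c$ built into the tilting; upgrading to a large enough $\gamma$ and then exploiting exponential decay of the Laplace transform of $g^{*n}$ is the technical heart of the argument.
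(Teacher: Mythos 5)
Your proof is correct, but it takes a genuinely different route from the paper. The paper runs a Picard iteration seeded with $F$ (resp.\ $H$): it shows the iterates $f^{(n+1)}=a+f^{(n)}\ast g$ are monotone decreasing (resp.\ increasing), proves a contraction estimate $\mathcal{N}_{\gamma,2}(f^{(n)}\ast g)\le\tfrac12\mathcal{N}_{\gamma,2}(f^{(n)})$ for $\gamma$ large, deduces that the iterates converge to a solution $h$ with $\mathcal{N}_{\gamma,2}(h)<\infty$, and invokes the uniqueness clause of Theorem \ref{theorem2} to identify $h=f$, after which monotonicity gives the comparison. You instead unroll the inequality directly into a partial Neumann series plus remainder, $F\ge\sum_{k=0}^{n-1}a\ast g^{\ast k}+F\ast g^{\ast n}$, and compare against the series representation $f=\sum_{k\ge0}a\ast g^{\ast k}$ (which is itself justified by the tilting/uniqueness discussion preceding Theorem \ref{theorem2}). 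Your approach buys two things: it isolates exactly where the growth hypothesis enters --- it is needed only to kill the remainder $H\ast g^{\ast n}$ in the subsolution case, via the clean identity $\int_0^\infty e^{-\gamma s}g^{\ast n}(s)\,ds=(c/\gamma)^{n(1-\theta)}$ and the reduction to $\gamma>c$ --- and it shows the supersolution half holds under nonnegativity alone, a slight strengthening of the stated theorem. The paper's approach is less explicit but treats both halves symmetrically and reuses the weighted-norm contraction machinery that recurs elsewhere in the paper. Both arguments ultimately rest on the same uniqueness statement for $f$, and your Laplace-transform computation ($b\Gamma(1-\theta)\gamma^{\theta-1}=(c/\gamma)^{1-\theta}$, multiplicativity under convolution) checks out.
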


\begin{proof}
Our proof uses the   Picard's iteration adapted from Georgiou et al \cite[Appendix]{georgiou-et-al} with crucial changes to their methods.

Let $f^{(0)}(t):= F(t),$ and define iteratively
$$f^{(n+1)}(t) := a(t) + \int_{0}^{t}f^{(n)}(s)g(t-s)ds\ \ (t>0).$$
We may observe that
$$f^{(1)}(t) = a(t) + \int_0^tF(s)g(t-s)ds \leq F(t) = f^{(0)}(t)\ \ \  (t>0).$$
And if $f^{(k)}\leq f^{(k-1)}$ for some integer $k>0,$ then
$$f^{(k+1)}(t)\leq a(t) + \int_0^tf^{(k-1)}(s)g(t-s)ds = f^{(k)}(t)\ \ \ (t>0).$$
Therefore, it follows from induction that
\begin{equation}\label{supersolution}
F = f^{(0)}\geq f^{(1)}\geq f^{(2)}\geq \cdots.
\end{equation}
Clearly,
\begin{eqnarray}
\left(f^{(n)}\ast g\right)(t) &:=& \int_0^tf^{(n)}(s)g(t-s)ds\nonumber\\
&\leq& \mathcal{N}_{\gamma, 2} (f^{(n)}) \int_0^te^{\gamma s}g(t-s)ds\nonumber\\
&=&e^{\gamma t}\mathcal{N}_{\gamma, 2}(f^{(n)}) \int_0^te^{-\gamma r}g(r)dr\nonumber\\
&\leq&e^{\gamma t}\mathcal{N}_{\gamma, 2}(f^{(n)}) \int_0^\infty e^{-\gamma r}g(r)dr\nonumber\\
&\leq& \frac{e^{\gamma t}}{2}\mathcal{N}_{\gamma, 2}(f^{(n)}),\nonumber
\end{eqnarray}
for every $\gamma\geq 2^{1-\theta}b.$ Therefore,
$$\mathcal{N}_{\gamma, 2}(f^{(n)}\ast g)\leq \frac{1}{2}\mathcal{N}_{\gamma, 2}(f^{(n)})\ \ \ (\gamma \geq 2^{1-\theta}b, n>0),$$
and because $a(t)\leq a(0)$ for all $t\geq 0,$
$$\mathcal{N}_{\gamma, 2}(f^{(n+1)})\leq a(0) + \frac{1}{2}\mathcal{N}_{\gamma, 2}(f^{(n)}) \ \ \ (\gamma \geq 2^{1-\theta}b, n>0).$$
We apply the preceding repeatedly to see that
$$\mathcal{N}_{\gamma, 2}(f^{(n+1)})\leq a(0)\sum_{j=0}^{n}2^{-j} + 2^{-(n+1)}\mathcal{N}_{\gamma, 2}(F) \ \ \ (\gamma \geq 2^{1-\theta}b, n>0).$$
Since $\gamma\mapsto \mathcal{N}_{\gamma, 2}(F)$ is decreasing, we may choose $\gamma \geq 2^{1-\theta}b$ large enough to ensure that $\mathcal{N}_{\gamma, 2}(F) < \infty.$ For this particular choice of $\gamma,$
$$\mathcal{N}_{\gamma, 2}(f^{(n+1)})\leq 2a(0) + 2^{-(n+1)}\mathcal{N}_{\gamma, 2}(F) \leq 2a(0) + \mathcal{N}_{\gamma, 2}(F)<\infty,$$
uniformly for all $n\geq 0.$

Similarly, and for the same choice of $\gamma,$
\begin{eqnarray}
\left|f^{(n+1)}(t) - f^{(n)}(t)\right|&\leq& \int_0^t|f^{(n)}(s) - f^{(n-1)}(s)|g(t-s)ds\nonumber\\
&\leq&e^{\gamma t}\mathcal{N}_{\gamma, 2}(f^{(n)} - f^{(n-1)})\int_0^\infty e^{-(t-s)\gamma}g(t-s)ds\nonumber\\
&=&\frac{e^{\gamma t}c\Gamma(1-\theta)}{\gamma^{(1-\theta)}}\mathcal{N}_{\gamma, 2}(f^{(n)} - f^{(n-1)})\nonumber\\
&\leq&\frac{e^{\gamma t}}{2}\mathcal{N}_{\gamma, 2}(f^{(n)} - f^{(n-1)})\ \ \ \ (n\geq 0).\nonumber
\end{eqnarray}
It follows that
$$\mathcal{N}_{\gamma, 2}(f^{(n+1)} - f^{(n)})\leq\frac{1}{2}\mathcal{N}_{\gamma, 2}(f^{(n)} - f^{(n-1)})\ \ \ (n\geq 0),$$
whence the $f^{(n)}$'s converge-in the norm $\mathcal{N}_{\gamma, 2}$-to some function $h$ (as $n\rightarrow\infty).$ Moreover, $h(t) :=\lim_{n\rightarrow\infty}f^{(n)}(t)$ for all $t>0, h$ satisfies the renewal equation \eqref{renewaleqn} by the monotone convergence theorem, and $\mathcal{N}_{\gamma, 2}(h)<\infty,$ whence
$$\limsup_{t\rightarrow \infty} t^{-1}\log h(t)\leq \gamma<\infty.$$
Therefore, the uniqueness portion of Theorem \eqref{theorem2} ensures that $h\equiv f.$ Because of \eqref{supersolution}, $f(t)=h(t) = \lim_{n\rightarrow \infty}f^{(n)}(t)\leq F(t)$ for all $t>0.$ This proves the assertion for $F$.

The claim about $H$ is proved similarly, but we begin our Picard iteration with $f^{(0)} := H(t)$ instead of $F(t),$ and then notice that
$$H(t) = f^{(0)}(t)\leq f^{(1)}(t)\leq f^{(2)}(t)\cdots,$$
in place of \eqref{supersolution}. The rest of the proof is the same as the one for $F.$
\end{proof}

\subsection{Proof of Theorem \ref{intermittency-thm}}
We complete the proof of Theorem \ref{intermittency-thm} in this section.
\begin{proof}
\begin{eqnarray}\label{supersolution}
\E(|u_t(x)|^2) &=& |(G_t\ast u_0)(x)|^2 + \int_0^tds\int_{{\R^d}}dy[G_{t-s}(y-x)]^2E(\sigma^2(u_s(y))\nonumber\\
&\geq&\mbox{inf}_{\substack{z\in {\R^d}}}|u_0(z)|^2 + L_\sigma^2\int_0^tI(s)\int_{{\R^d}}[G_{t-s}(y-x)]^2 dy\\
&=&\mbox{inf}_{\substack{z\in \R}}|u_0(z)|^2 +C^\ast L_\sigma^2\int_0^t\frac{I(s)}{(t-s)^{\frac{\beta d}{\alpha}}} dy\ \ \ \ \ \mbox{for all }\ t > 0,\nonumber
\end{eqnarray}
where
\begin{equation}
I(s) := \mbox{inf}_{y\in {\R^d}}\E(|u_s(y)|^2),
\end{equation}
and we find that
\begin{equation}\label{Isupersolution}
I(t) \geq \mbox{inf}_{\substack{z\in {\R^d}}}|u_0(z)|^2 + C^\ast L_\sigma^2\int_0^t\frac{I(s)}{(t-s)^{\frac{\beta d}{\alpha}}} dy\ \ \ \ \ \mbox{for all }\ t > 0.
\end{equation}
That is, $I$ is a supersolution to the renewal equation
$$f(t) = a + \int_0^tf(s)g(t-s)ds\ \ \ \ \ \ \ \ \mbox{for all }\ t > 0,$$
where $a := \mbox{inf}_{\substack{z\in {\R^d}}}|u_0(z)|^2$ and $g(t):= b/t^{\frac{\beta d}{\alpha}}$ for $b := c_0L_\sigma^2.$

Let us observe that according to \eqref{Isupersolution}, the function $I$ is a supersolution to the renewal equation \eqref{renewaleqn} with $a := \inf_{z\in {\R^d}} |u_0(z)|^2, \theta = \beta d/\alpha$ and $b := C^\ast L_\sigma^2.$ Therefore, it follows from Theorem \ref{theorem2} and \ref{sup-sub-solution} that
\begin{eqnarray}
\liminf_{t\rightarrow\infty}e^{-ct}\inf_{x\in {\R^d}}\E(|u_t(x)|^2)&=&\liminf_{t\rightarrow\infty}e^{-ct}I(t)\nonumber\\
&\geq& \liminf_{t\rightarrow\infty}e^{-ct}f(t)\nonumber\\
&=& \frac{1}{1-\gamma}\inf_{z\in{\R^d}} |u_0(z)|^2,
\end{eqnarray}
where $c = \left(b\Gamma(1-\theta)\right)^{1/(1-\theta)}.$
Take logarithms of both sides, divide by $t,$ and then finally let $t\rightarrow \infty$ to conclude the proof of Theorem \ref{intermittency-thm}.
\end{proof}

\section{Intermittency fronts}\label{intermitteny-fronts}

 Here we state and prove our second main result on the intermittency fronts for the solution of equation \eqref{tfspde}. In this section we use the notation $u_t(x):=u(t,x)$ to make the presentation concise.

  Assume that $\sigma(\cdot)$ in \eqref{tfspde} satisfies the following global Lipschitz condition, i.e. there exists a generic positive constant $\mbox{Lip}_\sigma$ such that and growth conditions:
\begin{equation}\label{Eq:Cond_a}
|\sigma(x)-\sigma(y)|\le \mbox{Lip}_\sigma|x-y|\quad\mbox{for all }\,\,x,\,y\in\R.
\end{equation}
Clearly, (\ref{Eq:Cond_a}) implies the uniform linear growth condition of $\sigma(\cdot).$ Recall the definition of $\mathscr{L}(\theta)$ from \eqref{intermittecy-front-L}.

\begin{tm}\label{maintheoremtwo}
Suppose that $d=1, \alpha=2$, measurable initial function $u_0:\RR{R}\to\RR{R}_+$  is bounded, has compact support, and is strictly positive in an open subinterval of $(0,\infty)$, and  $\sigma$  satisfies
 $\sigma(0)=0$. Then  the time fractional stochastic heat equation \eqref{tfspde} has a positive intermittency lower front. In fact,
\begin{equation}\label{intermittencypart1}
\mathscr{L}(\theta) < 0\ \ \mbox{if}\ \theta > \frac{2^{1/\beta}(\mbox{Lip}_\sigma c_0)^{4/(2-\beta)}}{(\mbox{Lip}_\sigma c_0)^{2\beta/(2-\beta)}}.
\end{equation}
In addition, under the cone condition $L_\sigma>0-$where $L_\sigma$ was defined in \eqref{conecondition}-there exists $\theta_0>0$ such that
\begin{equation}\label{intermittencypart2}
\mathscr{L}(\theta) > 0\ \ \mbox{if}\ \theta \in (0,\theta_0).
\end{equation}
That is, in this case, the stochastic heat equation has a finite intermittency upper front.
\end{tm}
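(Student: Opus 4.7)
The plan is to adapt the approach of Conus and Khoshnevisan \cite{conus-khoshnevisan} to the fractional setting, treating the two inequalities \eqref{intermittencypart1} and \eqref{intermittencypart2} separately but in parallel. The principal new analytic input, beyond what already appears in Section \ref{intermittency}, is a sharp large-$|x|$ estimate on the kernel $G_t$ of \eqref{Eq:Green1}. Because $\alpha=2$, subordination gives $G_t(x)=\int_0^\infty (4\pi\nu s)^{-1/2}\exp(-x^2/(4\nu s))\,f_{E_t}(s)\,ds$, and a Laplace-method calculation applied to this integral (using the known small-$s$ asymptotics of $f_{E_t}(s)$ inherited from the stable density $g_\beta$) should yield a stretched-exponential bound of the shape
\begin{equation}\label{kernel-tail-plan}
G_t(x)\;\le\; C\, t^{-\beta/2}\exp\!\bigl(-c\,(|x|/t^{\beta/2})^{2/(2-\beta)}\bigr).
\end{equation}
I would prove \eqref{kernel-tail-plan} as a preliminary lemma, tracking the constant $c$ carefully since it governs the explicit threshold in \eqref{intermittencypart1}.

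For the lower-front bound \eqref{intermittencypart1}, I would start from the mild formulation \eqref{Eq:Mild}, apply the Walsh isometry together with \eqref{Eq:Cond_a} and $\sigma(0)=0$, to obtain
\begin{equation}
\|u_t(x)\|_2^2 \;\le\; 2|(G_t\ast u_0)(x)|^2 \;+\; 2\,\mbox{Lip}_\sigma^2 \int_0^t\!\int_{\R} G_{t-s}^2(x-y)\,\|u_s(y)\|_2^2\,dy\,ds.
\end{equation}
Since $u_0$ is bounded with compact support, \eqref{kernel-tail-plan} makes the first term decay faster than any exponential in $t$, uniformly on the region $|x|>\theta t$. For the noise term, define $\mathcal{M}(t):=\sup_{|x|>\theta t}\|u_t(x)\|_2^2$ and split the $y$-integral into $|y|>\theta s$ (where one uses $\|u_s(y)\|_2^2\le\mathcal{M}(s)$) and $|y|\le\theta s$ (where $|x-y|\ge\theta(t-s)$ and \eqref{kernel-tail-plan} forces $G_{t-s}^2(x-y)$ to be small). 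Combined with the uniform-in-$(s,y)$ second-moment bound from the existence theorem, this produces a renewal-type inequality $\mathcal{M}(t)\le A_\theta(t)+B\int_0^t \mathcal{M}(s)\,g_\theta(t-s)\,ds$ in which both $A_\theta(t)$ and $g_\theta$ are stretched-exponentially small in $\theta$. Solving this inequality by the weighted-norm argument from the proof of Theorem \ref{sup-sub-solution} and optimizing the split scale against the exponent $2/(2-\beta)$ from \eqref{kernel-tail-plan} should yield the threshold $\theta>2^{1/\beta}(\mbox{Lip}_\sigma c_0)^2$, using the algebraic identity $\tfrac{4}{2-\beta}-\tfrac{2\beta}{2-\beta}=2$ to put the bound in the form stated.

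For the upper-front bound \eqref{intermittencypart2}, the cone condition $L_\sigma>0$ gives the reverse Walsh estimate
\begin{equation}
\|u_t(x)\|_2^2 \;\ge\; |(G_t\ast u_0)(x)|^2 \;+\; L_\sigma^2\int_0^t\!\int_{\R} G_{t-s}^2(x-y)\,\|u_s(y)\|_2^2\,dy\,ds.
\end{equation}
Fix $\theta_0$ small and restrict attention to $|x|\le\theta_0 t$. Since $G_t$ concentrates on the scale $|x|\sim t^{\beta/2}\ll t$, the deterministic term is bounded below uniformly in such $x$ by exploiting strict positivity of $u_0$ on an open subinterval (here the compact-support hypothesis is harmless, as only local mass near an interior point of the support is used). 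Defining $I_\theta(t):=\inf_{|x|\le\theta_0 t}\|u_t(x)\|_2^2$ and arguing as in the proof of Theorem \ref{intermittency-thm}, one reduces matters to a renewal subsolution inequality $I_\theta(t)\ge a+C^\ast L_\sigma^2\int_0^t I_\theta(s)(t-s)^{-\beta/2}\,ds$, so that Theorems \ref{theorem2} and \ref{sup-sub-solution} deliver an exponential lower bound on $I_\theta(t)$, giving $\mathscr{L}(\theta)>0$ for $\theta\in(0,\theta_0)$.

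The hardest step will be establishing \eqref{kernel-tail-plan} with explicit constants: the stretched-exponential exponent $2/(2-\beta)$ arises from the Laplace-method optimum balancing $x^2/s$ against the stable-subordinator tail of $f_{E_t}(s)$ as $s\to 0^+$, and tracking the prefactor $c$ carefully enough to recover the sharp factor $2^{1/\beta}$ in the stated bound requires a genuinely asymptotic analysis rather than a crude H-function estimate. Once \eqref{kernel-tail-plan} is in hand, the remaining work in both parts follows the Conus--Khoshnevisan template, augmented by the fractional renewal theory already developed in Section \ref{intermittency}.
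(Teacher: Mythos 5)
Your heuristic for the stretched-exponential tail of $G_t$ (exponent $2/(2-\beta)$ in $|x|/t^{\beta/2}$, from balancing the Gaussian against the large-$s$ tail of $f_{E_t}$) is correct and is precisely the Legendre dual of the rate that appears in the paper's bound, but both halves of your argument contain gaps that cannot be closed as written.

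For \eqref{intermittencypart1}, your renewal inequality for $\mathcal{M}(t):=\sup_{|x|>\theta t}\E(|u_t(x)|^2)$ does not close. On the region $|y|>\theta s$ you must retain the full kernel mass $\int_{\R}G_{t-s}^2(x-y)\,dy=C^\ast(t-s)^{-\beta/2}$, since the point $y=x$ lies in that region; hence the kernel $g_\theta$ multiplying $\mathcal{M}(s)$ is \emph{not} small in $\theta$ --- only the forcing is. Because $\tau\mapsto C^\ast\mbox{Lip}_\sigma^2\,\tau^{-\beta/2}$ has infinite integral, the comparison renewal equation is explosive: by Theorem \ref{theorem2} its solution grows like $e^{ct}$ with the same positive rate $c=(C^\ast\mbox{Lip}_\sigma^2\Gamma(1-\beta/2))^{1/(1-\beta/2)}$ no matter how small the strictly positive forcing is, and no weight $e^{\lambda t}$, $\lambda>0$, can make $\int_0^\infty e^{\lambda s}g_\theta(s)\,ds<1$. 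So this route yields only a small prefactor on an exponentially \emph{growing} bound, not $\mathscr{L}(\theta)<0$. The spatial decay and temporal growth must be coupled inside the fixed-point iteration itself; the paper does this with the weighted norms $\mathcal{N}_{\gamma,c}(\Phi)=\sup_{t,x}\bigl[e^{-\gamma t+cx}\E(|\Phi_t(x)|^2)\bigr]^{1/2}$, the contraction estimate of Proposition \ref{propostionforInt}, and the identity $\int_{\R}e^{-cy}G_s(y)^2\,dy\lesssim s^{-\beta/2}E_\beta(\nu c^2s^\beta)$ obtained from the Gaussian moment generating function and the moments of $E_s$ --- no pointwise tail bound on $G_t$ is needed, and the threshold falls out of $\sup_c[\theta c-(2\nu c^2)^{1/\beta}]$ via Corollary \ref{coro3}.

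For \eqref{intermittencypart2}, the claimed subsolution inequality for $I_\theta(t)=\inf_{|x|\le\theta_0 t}\E(|u_t(x)|^2)$ fails on two counts. First, $(G_t\ast u_0)(x)$ is not bounded below uniformly over $|x|\le\theta_0 t$: since $u_0$ has compact support and $G_t$ spreads only on scale $t^{\beta/2}\ll t$, at $|x|\approx\theta_0 t$ the deterministic term is exponentially small in $t$, not a positive constant $a$. Second, restricting $y$ to the region where the infimum bound applies ($|y|\le\theta_0 s$) shrinks the kernel $\int_{|y|\le\theta_0 s}G_{t-s}^2(x-y)\,dy$ far below $C^\ast(t-s)^{-\beta/2}$ for $x$ near the boundary $|x|=\theta_0 t$, because that ball sits at distance at least $\theta_0(t-s)$ from such $x$. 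The paper sidesteps both problems by working with the integrated tail mass $M_\pm(t)=\int_{\pm y>\theta t}\E(|u_t(y)|^2)\,dy$, for which the elementary inequality $1_{[\theta t,\infty)}(x)\ge 1_{[\theta(t-s),\infty)}(x-y)\cdot 1_{[\theta s,\infty)}(y)$ produces a genuine convolution inequality $M_+\ge(\mbox{forcing})+L_\sigma^2(T\ast M_+)$ with $T(t)=\int_{\theta t}^\infty G_t^2$; one then shows $(\mathcal{L}T)(\lambda)>L_\sigma^{-2}$ for small $\theta$ and $\lambda$, concludes that $(\mathcal{L}M)(\lambda)=\infty$, and converts this into exponential growth of $\sup_{|x|>\theta t}\E(|u_t(x)|^2)$ by combining with the already-proven part \eqref{intermittencypart1}. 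You will need this integrated formulation, or an equivalent, rather than a pointwise infimum and the renewal theorem.
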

This theorem in the case of the stochastic heat equation was proved by Conus and Khoshnevisan \cite{conus-khoshnevisan}.

We first prove the next proposition that implies that the solution of equation \eqref{tfspde} is square integrable over time in the language of partial differential equations.
\begin{prop}Assume that $\alpha\in (0,2]$,   and $d<\min\{2,\beta^{-1}\}\a$, then $u_t\in L^2(\R)$ a.s. for all $t\geq 0;$ in
fact, for any fixed $\epsilon\in (0,1)$ and $t\geq 0,$
\begin{equation}\label{solutionL2integrability}
\E\left(||u_t||^2_{L^2(\R)}\right)\leq \epsilon^{-1}||u_0||^2_{L^2(\R)}\exp\left(\bigg[\frac{C^\ast\Gamma(1-\beta d/\a)\mbox{Lip}_\sigma^2}{1-\epsilon}\bigg]^{\frac{1}{1-\beta d/\a}} t\right)
\end{equation}
\end{prop}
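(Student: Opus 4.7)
The plan is to reduce everything to a renewal inequality for the scalar function $N(t):=\E\|u_t\|_{L^2(\R^d)}^2$ and then compare it against an explicit exponential super-solution. Starting from the mild formulation \eqref{Eq:Mild} I write $u_t(x)=(G_t\ast u_0)(x)+I_t(x)$ with $I_t(x):=\int_{(0,t)\times\R^d}G_{t-r}(y-x)\sigma(u_r(y))W(dr\,dy)$. Since the Walsh integral $I_t(x)$ has mean zero, squaring produces no cross term, and the Walsh isometry gives
\[
\E[u_t(x)^2]=|(G_t\ast u_0)(x)|^2+\int_0^t\!\!\int_{\R^d}G_{t-r}(y-x)^2\,\E[\sigma(u_r(y))^2]\,dy\,dr.
\]

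I would next integrate over $x\in\R^d$. Young's convolution inequality with $\|G_t\|_{L^1(\R^d)}=1$ (since $G_t$ is a probability density) gives $\|G_t\ast u_0\|_{L^2}\le\|u_0\|_{L^2}$ for the deterministic part. For the stochastic part, Fubini plus Lemma \ref{Lem:Green1} produce $\int_{\R^d}G_{t-r}^2(x-y)\,dx=C^\ast(t-r)^{-\beta d/\a}$, and the linear growth $|\sigma(u)|\le\mbox{Lip}_\sigma|u|$ reduces $\int_{\R^d}\E[\sigma(u_r(y))^2]\,dy$ to $\mbox{Lip}_\sigma^2 N(r)$. Hence
\[
N(t)\le a+b\int_0^t(t-r)^{-\theta}N(r)\,dr,\qquad a=\|u_0\|_{L^2}^2,\ b=C^\ast\mbox{Lip}_\sigma^2,\ \theta=\beta d/\a\in(0,1),
\]
so $N$ is a subsolution of the renewal equation \eqref{renewaleqn}.

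Now I would construct the explicit super-solution $F(t):=\epsilon^{-1}\|u_0\|_{L^2}^2\,e^{c_\epsilon t}$ with $c_\epsilon:=[(1-\epsilon)^{-1}b\Gamma(1-\theta)]^{1/(1-\theta)}$, which is exactly the exponent in \eqref{solutionL2integrability}. Using the change of variable $s=t-r$ and the identity $\int_0^\infty e^{-c_\epsilon s}s^{-\theta}\,ds=\Gamma(1-\theta)c_\epsilon^{-(1-\theta)}=(1-\epsilon)/b$, a direct calculation gives
\[
a+b\int_0^t(t-r)^{-\theta}F(r)\,dr\le a+(1-\epsilon)F(t)=\epsilon F(0)+(1-\epsilon)F(t)\le F(t)\quad(t\ge 0),
\]
so $F$ is a super-solution with $\mathcal{N}_{\gamma,2}(F)<\infty$ for every $\gamma>c_\epsilon$. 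Applying the comparison Theorem \ref{sup-sub-solution} then yields $N(t)\le F(t)$, which is \eqref{solutionL2integrability}.

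The main obstacle I expect is the admissibility requirement $\mathcal{N}_{\gamma,2}(N)<\infty$ needed to invoke Theorem \ref{sup-sub-solution} on the subsolution side. I would circumvent this by first running the entire argument on the Picard iterates $\{u_t^{(n)}\}$ for the mild formulation---each of which is seen inductively to lie in $L^2(\R^d)$, starting from the deterministic $u_t^{(0)}=G_t\ast u_0$---to obtain the uniform-in-$n$ bound $\E\|u_t^{(n)}\|_{L^2}^2\le F(t)$, and then passing to the limit using the Picard convergence already established in the existence proof of Nane et al.~\cite{nane-et-al-2014}.
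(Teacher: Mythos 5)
Your proposal is correct and follows essentially the same route as the paper: Walsh isometry on the mild/Picard formulation, integration in $x$ with Lemma \ref{Lem:Green1} and $\|G_t\ast u_0\|_{L^2}\le\|u_0\|_{L^2}$, and then an induction over the Picard iterates that propagates the exponential bound with exponent $\gamma_\ast=c_\epsilon=[(1-\epsilon)^{-1}C^\ast\mbox{Lip}_\sigma^2\Gamma(1-\beta d/\a)]^{1/(1-\beta d/\a)}$, followed by Fatou. The only difference is presentational: the paper phrases the induction as a recursion for the weighted norms $J^{(n)}(\gamma_\ast)$ summed as a geometric series, while you phrase the same Gamma-integral computation as verifying that $F(t)=\epsilon^{-1}\|u_0\|_{L^2}^2e^{c_\epsilon t}$ is a supersolution dominating each iterate; your remark that the comparison theorem cannot be applied directly to $N$ without an a priori exponential bound, and the Picard workaround, is exactly how the paper handles it.
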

\begin{proof}
Using $|\sigma(z)\leq \mbox{Lip}_\sigma|z|$ for all $z\in\R^d$ we have,
\begin{eqnarray}
&&\E\left(|u_t^{(n+1)}(x)|^2\right)\\
&=&|(G_t\ast u_0)(x)|^2 + \int_0^t ds\int_{\R^d}dy [G_{t-s}(y-x)]^2 \E\left(|\sigma(u_s^{(n)}(y))|^2\right)\nonumber\\
&\leq&|(G_t\ast u_0)(x)|^2 + \mbox{Lip}_\sigma^2\int_0^t ds\int_{\R^d}dy [G_{t-s}(y-x)]^2 \E\left(|u_s^{(n)}(y)|^2\right).\nonumber
\end{eqnarray}
Because $\int_{\R^d}[G_{t-s}(y-x)]^2\, dy = C^\ast(t-s)^{-\beta d/\a},$ we integrate the preceding $[dx]$ and apply Fubini's theorem to see that
\begin{equation}
J^{(l)}(\gamma) := \sup_{t\geq0}\bigg[e^{-\gamma t}\E\left(||u_t^{(l)}||^2_{L^2({\R^d})}\right)\bigg]\ \ \ \ \ \ (\beta\geq 0),\nonumber
\end{equation}
solves
\begin{eqnarray}
J^{(n+1)}(\gamma)&\leq& K(\gamma) + c_1\mbox{Lip}_\sigma^2J^{(n)}(\gamma)\int_0^{t}(t-s)^{-\beta d/\a}e^{-\gamma(t - s)}ds \ \ (\beta\geq 0)\nonumber\\
&\leq & K(\gamma) + c_1\mbox{Lip}_\sigma^2J^{(n)}(\gamma)\int_0^{\infty}s^{-\beta d/\a}e^{-\gamma s}ds \nonumber\\
&=&\sup_{t\geq 0}\bigg[e^{-\gamma t}||G_t\ast u_0||^2_{L^2(\R)}\bigg] + \frac{c_1\Gamma(1-\beta d/\a)\mbox{Lip}_\sigma^2J^{(n)}(\gamma)}{\gamma^{1-\beta d/\a}}.\nonumber
\end{eqnarray}
Hence, by Cauchy-Schwarz inequality-we can see that $||G_t\ast u_0||^2_{L^2({\R^d})}\leq ||u_0||^2_{L^2({\R^d})},$ using this we obtain the following recursive inequality:
\begin{equation}
J^{(n+1)}(\gamma)\leq ||u_0||^2_{L^2({\R^d})} + \frac{c_1\Gamma(1-\beta d/\a)\mbox{Lip}_\sigma^2}{\gamma^{1-\beta d/\a}}\cdot J^{(n)}(\gamma).
\end{equation}

The preceding holds for all $n\geq 0$ and $\gamma >0.$ So we choose
$$\gamma := \gamma_\ast := \bigg[\frac{c_1\Gamma(1-\beta d/\a)\mbox{Lip}_\sigma^2}{1-\epsilon}\bigg]^{\frac{1}{1-\beta d/\a}}.$$
Then,
\begin{eqnarray}
J^{(n+1)}(\gamma_\ast)&\leq& ||u_0||^2_{L^2({\R^d})} + (1-\epsilon)J^{(n)}(\gamma_\ast)\nonumber\\
&\leq&||u_0||^2_{L^2({\R^d})} +(1-\epsilon)||u_0||^2_{L^2({\R^d})} +(1-\epsilon)^2J^{(n-1)}(\gamma_\ast)\nonumber\\
&\leq&\cdots \leq ||u_0||^2_{L^2({\R^d})}\cdot \sum_{i=0}^n (1-\epsilon)^i + J^{(0)}(\gamma_\ast)\cdot(1-\epsilon)^{n+1}\nonumber\\
&\leq&\epsilon^{-1} ||u_0||^2_{L^2({\R^d})}.\nonumber
\end{eqnarray}
since $u_t^{(n+1)}(x)$ converges to $u_t(x)$ in $L^2(\Omega)$ as $n\rightarrow \infty$ by Fatou's lemma we get
\begin{equation}
\sup_{t\geq 0}\left(e^{-\gamma_{\ast} t} \E\left(||u_t||^2_{L^2({\R^d})}\right)\right)\leq \epsilon^{-1}||u_0||^2_{L^2({\R^d})}\nonumber.
\end{equation}
This completes the proof.
\end{proof}

The proof of Theorem \ref{maintheoremtwo} requires the following ``weighted young inequality'' which is an extension of Proposition  8.3 in \cite{khoshnevisan-cbms}.

\begin{prop}\label{propostionforInt}
Let $\alpha=2$, and $d=1$.
Define for all $\gamma > 0, c\in\R,$ and $\Phi\in\mathcal{L}^{\beta,2},$
\begin{equation*}
\mathcal{N}_{\gamma,c}(\Phi):= \sup_{t\geq 0}\sup_{x\in{\R}}\bigg[e^{-\gamma t + cx}\E\left(|\Phi_t(x)|^2\right)\bigg]^{1/2}.
\end{equation*}
Then,
\begin{equation*}
\mathcal{N}_{\gamma,c}(G\circledast \Phi)\leq C(c,\gamma,\beta)\mathcal{N}_{\gamma,c}(\Phi)\ \ \ \mbox{for all}\ \gamma^\beta >\nu  c^2,
\end{equation*}
where $C(c,\gamma,\beta)$ is a finite constant that depends on $c,\gamma,$ and $\beta$.
\end{prop}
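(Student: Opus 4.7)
The plan is to combine Walsh's stochastic isometry with a deterministic integral estimate that is tailored to the subordinated Brownian kernel available when $\alpha=2$. By the isometry,
\begin{equation*}
\E\bigl|(G\circledast\Phi)_t(x)\bigr|^2 \;=\; \int_0^t\!\!\int_{\R} G_{t-s}^2(y-x)\,\E|\Phi_s(y)|^2\,dy\,ds.
\end{equation*}
The definition of the weighted norm gives $\E|\Phi_s(y)|^2\le \mathcal N_{\gamma,c}^2(\Phi)\,e^{\gamma s-cy}$, so multiplying through by $e^{-\gamma t+cx}$ and changing variables $r:=t-s$, $z:=y-x$ collapses the $x$-dependence and yields
\begin{equation*}
e^{-\gamma t+cx}\,\E\bigl|(G\circledast\Phi)_t(x)\bigr|^2 \;\le\; \mathcal N_{\gamma,c}^2(\Phi)\int_0^t e^{-\gamma r}\!\left(\int_{\R}G_r^2(z)\,e^{-cz}\,dz\right)dr.
\end{equation*}
The proof therefore reduces to showing that $I:=\int_0^\infty e^{-\gamma r}\int_{\R} G_r^2(z)e^{-cz}\,dz\,dr<\infty$ precisely under $\gamma^\beta>\nu c^2$; then $C(c,\gamma,\beta):=\sqrt{I}$.

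The inner integral is where the specific structure for $\alpha=2$ enters. Since $G_r$ is a mixture of centered Gaussians,
\begin{equation*}
G_r(x)\;=\;\int_0^\infty \frac{e^{-x^2/(4\nu u)}}{\sqrt{4\pi\nu u}}\,f_{E_r}(u)\,du,
\end{equation*}
the kernel is symmetric in $x$ and attains its maximum at $x=0$, which gives the clean pointwise bound $G_r^2(z)\le G_r(0)\,G_r(z)$. I would then identify the two factors separately: by symmetry of $G_r$ and the second lemma applied with $\la=|c|$,
\begin{equation*}
\int_{\R}G_r(z)e^{-cz}\,dz \;=\; \int_{\R}G_r(z)e^{|c|z}\,dz \;=\; E_\beta(\nu c^2 r^\beta),
\end{equation*}
while by subordination $G_r(0)=(4\pi\nu)^{-1/2}\,\E[E_r^{-1/2}]$, and the moment identity $\E[E_r^k]=\Gamma(1+k)r^{\beta k}/\Gamma(1+\beta k)$ (valid for $k>-1$, in particular $k=-1/2$) delivers $G_r(0)=K r^{-\beta/2}$ for an explicit constant $K=K(\nu,\beta)$. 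Combining, $\int_{\R}G_r^2(z)e^{-cz}\,dz\le K r^{-\beta/2} E_\beta(\nu c^2 r^\beta)$.

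Inserting this bound into $I$ and expanding the Mittag--Leffler series term by term (justified by nonnegativity), together with $\int_0^\infty r^{\beta k-\beta/2}e^{-\gamma r}\,dr=\Gamma(1+\beta k-\beta/2)/\gamma^{1+\beta k-\beta/2}$ (the integrand is integrable at $0$ since $\beta\in(0,1)$), gives
\begin{equation*}
I\;\le\;K\,\gamma^{\beta/2-1}\sum_{k=0}^{\infty}\left(\frac{\nu c^2}{\gamma^\beta}\right)^{\!k}\frac{\Gamma(1+\beta k-\beta/2)}{\Gamma(1+\beta k)}.
\end{equation*}
By Stirling, $\Gamma(1+\beta k-\beta/2)/\Gamma(1+\beta k)\sim (\beta k)^{-\beta/2}$ as $k\to\infty$, so the ratio test shows the series converges if and only if $\nu c^2/\gamma^\beta<1$, i.e.\ exactly when $\gamma^\beta>\nu c^2$; the constant $C(c,\gamma,\beta)$ is then read off. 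The step I expect to be the main obstacle is matching this threshold sharply: the leading asymptotics $E_\beta(\nu c^2 r^\beta)\sim \beta^{-1}e^{(\nu c^2)^{1/\beta}r}$ as $r\to\infty$ mean that any looser pointwise control on $G_r^2 e^{-cz}$ would blur the critical exponent and yield only a weaker range of admissible $\gamma$; the unimodality bound $G_r(z)\le G_r(0)$ is what keeps the estimate tight. Taking suprema in $(t,x)$ and a square root on both sides completes the argument.
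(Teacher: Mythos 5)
Your argument is correct and follows essentially the same route as the paper: Walsh isometry, reduction to bounding $\int_{\R}e^{-cy}[G_s(y)]^2\,dy$ by $G_s(0)\,E_\beta(\nu c^2 s^\beta)$ with $G_s(0)=\text{const}\cdot s^{-\beta/2}$ via the moment formula $\E[E_s^{k}]=\Gamma(1+k)s^{\beta k}/\Gamma(1+\beta k)$, and then term-by-term integration of the Mittag--Leffler series yielding convergence exactly for $\gamma^\beta>\nu c^2$. The only cosmetic differences are that the paper applies the sup bound to one Gaussian factor $p_u(y)\le(4\pi\nu u)^{-1/2}$ inside the double subordination integral rather than invoking unimodality of $G_r$ itself, and controls the Gamma ratio $\Gamma(1+\beta k-\beta/2)/\Gamma(1+\beta k)$ by Kershaw's inequality (giving an explicit constant) where you use Stirling and the ratio test.
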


\begin{proof}
Our proof is adapted from the proof of Proposition 8.3 in \cite{khoshnevisan-cbms} with many crucial changes.
We apply the Walsh isometry in order to see that
\begin{eqnarray}
&&e^{-\gamma t + cx}\E(|(G\circledast \Phi)_t(x)|^2)\nonumber\\
&=&e^{-\gamma t + cx}\int_0^tds\int_{\R}dy [G_{t-s}(y-x)]^2\E(|\Phi_s(y)|^2)\nonumber\\
&\leq&[\mathcal{N}_{\gamma,c}(\Phi)]^2\int_0^tds\int_{\R}dy e^{-\gamma(t-s)+c(x-y)}[G_{t-s}(y-x)]^2dy\nonumber\\
&\leq&[\mathcal{N}_{\gamma,c}(\Phi)]^2\int_0^tds\int_{\R}dy e^{-\gamma s-cy}[G_s(y)]^2dy\nonumber\\
&\leq&[\mathcal{N}_{\gamma,c}(\Phi)]^2\int_0^t e^{-\gamma s}ds\int_{\R} e^{-cy}[G_s(y)]^2dy\nonumber
\end{eqnarray}
Using
\begin{eqnarray}
[G_s(y)]^2 &=& \int_0^\infty p_u(y)f_{E_s}(u)du\int_0^\infty p_v(y)f_{E_s}(v)dv\\ \nonumber
&=&\int_0^\infty\int_0^\infty p_u(y)p_v(y)f_{E_s}(u)f_{E_s}(v)dudv
\end{eqnarray}
where $p_u(y) = \frac{e^{-\frac{|y|^2}{4\nu u}}}{\sqrt{4\nu \pi u}}$, $p_u(y)\leq \frac{1}{\sqrt{4\nu\pi u}}$ and a standard moment-generating computation for Gaussian laws we have,
\begin{eqnarray}
&&\int_{\R} e^{-cy}[G_s(y)]^2 dy \nonumber\\
&\leq&\int_0^\infty\int_0^\infty\frac{f_{E_s}(u)du}{\sqrt{4\pi\nu  u}}\int_{\R}e^{-cy}p_v(y)dyf_{E_s}(v)dv\nonumber\\
&=&\int_0^\infty\frac{f_{E_s}(u)du}{\sqrt{4\pi\nu  u}}\int_{0}^\infty e^{\nu c^2v}f_{E_s}(v)dv.\nonumber
\end{eqnarray}
Using uniqueness of Laplace transform we can easily show $\E[E_s^k] = \frac{\Gamma(1+k)s^{\beta k}}{\Gamma(1+\beta k)}$ for $k>-1$ and using these moments we can also compute the moment-generating function of inverse stable subordinator $E_s$ as
$$\E[e^{wE_s}] = \sum_{k=0}^{\infty}\frac{w^k\E(E_s^k)}{k!}=\sum_{k=0}^{\infty}\frac{w^ks^{\beta k}}{\Gamma(1+\beta k)}= E_\beta(ws^\beta).$$
Hence, we get
\begin{eqnarray}
\int_{\R} e^{-cy}[G_s(y)]^2 dy \leq \int_0^\infty E_\beta(\nu c^2s^\beta)\frac{f_{E_s}(u)du}{\sqrt{4\pi\nu u}}\leq\frac{s^{-\beta/2}E_\beta(\nu c^2s^\beta)}{2\sqrt{\nu}\Gamma(1-\beta/2)}\nonumber.
\end{eqnarray}
Therefore, using D. Kershaw inequality $\Gamma(x+\lambda)/\Gamma(x+1)<1/(x+1/2)^{1-\lambda}$ for $0<\lambda <1, x>0$ we have
\begin{eqnarray}
&&e^{-\gamma t + cx}\E(|(G\circledast \Phi)_t(x)|^2)\nonumber\\
&\leq&\frac{[\mathcal{N}_{\gamma,c}(\Phi)]^2}{2\sqrt{\nu}\Gamma(1-\beta/2)}\sum_{k=0}^{\infty}\frac{\nu^kc^{2k}}{\Gamma(1+\beta k)}\int_0^\infty e^{-\gamma s}s^{\beta k-\beta/2}ds\nonumber\\
&=&\frac{[\mathcal{N}_{\gamma,c}(\Phi)]^2}{2\sqrt{\nu}\Gamma(1-\beta/2)\gamma^{1-\beta/2}}\sum_{k=0}^{\infty}\frac{\nu^kc^{2k}}{\gamma^{\beta k}}\frac{\Gamma(1+\beta k-\beta/2)}{\Gamma(1+\beta k)}\nonumber\\
&\leq&\mathcal{N}_{\gamma,c}(\Phi)]^2\frac{2^{\beta/2-1}}{\sqrt{\nu}\Gamma(1-\beta/2)\gamma^{1-\beta/2}}\sum_{k=0}^{\infty}\frac{\nu^kc^{2k}}{\gamma^{\beta k}}\nonumber,
\end{eqnarray}
the last series converges for $\gamma^\beta > \nu c^2.$ The right-hand side is independent of $(t,x).$ Therefore, we optimize over $(t,x)$ and then take square roots of both sides in order to finish the proof.
\end{proof}
The next corollary is an extension of Corollary 8.4 in \cite{khoshnevisan-cbms}.
\begin{coro}\label{coro3}
If $c^{1/\beta-1/2}>\mbox{Lip}_\sigma\sqrt{\frac{2^{\beta/2+1/2-1/\beta}}{\nu^{1/\beta}\Gamma(1-\beta/2)}}$, then the solution to the tfspde \eqref{tfspde} for  $\a=2$ satisfies
\begin{equation}
\E(|u_t(x)|^2)\leq A(c,\beta)\exp\left(-c|x| + (2\nu c^2)^{1/\beta}t\right),
\end{equation}
simultaneously for all $x\in\R$ and $t\geq 0,$ where $A(c,\beta)$ is a finite constant that depends only on $c$ and $\beta.$
\end{coro}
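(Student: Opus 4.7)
The plan is to implement a Picard scheme in the weighted norm $\mathcal{N}_{\gamma,c}$ from Proposition \ref{propostionforInt}, tuning $\gamma$ to the value that makes the contraction threshold coincide with the hypothesis on $c$. Concretely, set $\gamma := (2\nu c^2)^{1/\beta}$, so that $\nu c^2/\gamma^\beta = \tfrac{1}{2}$ and the geometric series at the end of the proof of Proposition \ref{propostionforInt} sums to $2$. A short computation then identifies
\[
C(c,\gamma,\beta)^2 = \frac{2^{\beta/2+1/2-1/\beta}}{\nu^{1/\beta}\Gamma(1-\beta/2)\, c^{2/\beta-1}},
\]
and the hypothesis $c^{1/\beta-1/2}>\mbox{Lip}_\sigma\sqrt{2^{\beta/2+1/2-1/\beta}/(\nu^{1/\beta}\Gamma(1-\beta/2))}$ is exactly $q := C(c,\gamma,\beta)\mbox{Lip}_\sigma<1$.

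Next I would show $\mathcal{N}_{\gamma,c}(G\ast u_0)<\infty$ by using the compact support of $u_0$. With $u_0$ supported in $[-M,M]$ and $K := \|u_0\|_\infty$, the identity $\int_\R e^{cz}G_s(z)\,dz = E_\beta(\nu c^2 s^\beta)$ established earlier gives, for $c>0$,
\[
\sup_{x\in\R} e^{cx}(G_t\ast u_0)(x)\leq Ke^{cM}E_\beta(\nu c^2 t^\beta).
\]
Multiplying this by the trivial bound $(G_t\ast u_0)(x)\leq K$ and invoking the Mittag-Leffler asymptotic $E_\beta(z)\leq C_\beta e^{z^{1/\beta}}$, valid for $\beta\in(0,1]$, one obtains
\[
\sup_{x\in\R} e^{cx}(G_t\ast u_0)(x)^2 \leq K^2 C_\beta\, e^{cM}\, e^{(\nu c^2)^{1/\beta}t},
\]
and since $\gamma = 2^{1/\beta}(\nu c^2)^{1/\beta}\geq (\nu c^2)^{1/\beta}$ for $\beta\in(0,1]$, the weighted sup over $t$ is finite.

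For the stochastic convolution, Proposition \ref{propostionforInt} together with $|\sigma(z)|\leq\mbox{Lip}_\sigma|z|$ (which uses $\sigma(0)=0$) yields the Picard estimate
\[
\mathcal{N}_{\gamma,c}(u^{(n+1)}) \leq \mathcal{N}_{\gamma,c}(G\ast u_0) + q\,\mathcal{N}_{\gamma,c}(u^{(n)}),
\]
starting from $u^{(0)} := G\ast u_0$. Since $q<1$, iterating shows $\sup_n \mathcal{N}_{\gamma,c}(u^{(n)}) \leq \mathcal{N}_{\gamma,c}(G\ast u_0)/(1-q)$, and Fatou's lemma applied to the $B_{T,2}$-convergence from Nane et al.\ \cite{nane-et-al-2014} transfers this to the solution: $\mathcal{N}_{\gamma,c}(u)<\infty$. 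Unwinding gives $\E|u_t(x)|^2 \leq A_+(c,\beta)e^{\gamma t - cx}$. Running the same argument with $-c$ in place of $c$---the hypothesis and $C(\cdot,\gamma,\beta)$ depend only on $|c|$, so this is legitimate---produces the companion bound $\E|u_t(x)|^2 \leq A_-(c,\beta)e^{\gamma t + cx}$, and taking the minimum of the two yields the required $\exp(-c|x|+(2\nu c^2)^{1/\beta}t)$ decay with $A(c,\beta):=\max(A_+,A_-)$.

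The main obstacle is the weighted-norm control of $G\ast u_0$: the pointwise bound $\|G_t\ast u_0\|_\infty\leq K$ alone fails to control $e^{cx}|\cdot|^2$ as $x\to+\infty$, while the $E_\beta$-moment bound alone blows up at $x\to-\infty$ once squared; the resolution is to split $|\Phi|^2=\Phi\cdot\Phi$ and apply each bound to one factor. The other delicate point is the precise algebraic identification of $C(c,\gamma,\beta)$ at the pivot $\gamma^\beta=2\nu c^2$, which is how the exponent $2^{\beta/2+1/2-1/\beta}/(\nu^{1/\beta}\Gamma(1-\beta/2))$ in the hypothesis emerges.
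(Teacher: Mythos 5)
Your proposal is correct and follows essentially the same route as the paper: a Picard iteration in the weighted norm $\mathcal{N}_{\gamma,c}$ with $\gamma=(2\nu c^2)^{1/\beta}$, the contraction $C(c,\gamma,\beta)\,\mbox{Lip}_\sigma<1$ supplied by Proposition \ref{propostionforInt} (with exactly the algebraic identification of $C$ you give), compact support of $u_0$ to control the deterministic term, Fatou's lemma to pass to the solution, and the $\pm c$ symmetrization. The only (harmless) deviation is that you control the free term via the asymptotic $E_\beta(z)\lesssim e^{z^{1/\beta}}$ together with the splitting $|G_t\ast u_0|^2=(G_t\ast u_0)\cdot(G_t\ast u_0)$, whereas the paper bounds the tilted Mittag--Leffler series term by term by $2$ at the pivot $\gamma^\beta=2\nu c^2$.
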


\begin{proof}The proof is adapted form the proof of Corollary 8.4 in \cite{khoshnevisan-cbms} with many crucial changes.
Recall that for all $\gamma > 0$
\begin{eqnarray}
\mathcal{N}_{\gamma,c}(u^{(n+1)})&\leq& [\mathcal{N}_{\gamma,c}(G_t\ast u_0)] + [\mathcal{N}_{\gamma,c}(G\circledast \sigma(u^{(n)})]\nonumber\\
&\leq&[\mathcal{N}_{\gamma,c}(G_t\ast u_0)] + C(c,\gamma,\beta)[\mathcal{N}_{\gamma,c}(\sigma(u^{(n)}))],\nonumber
\end{eqnarray}
using Proposition \ref{propostionforInt}. Because $\sigma(z)\leq \mbox{Lip}_\sigma|z|$ for all $z\in\R,$
$$\mathcal{N}_{\gamma,c}(\sigma(u^{(n)})\leq \mbox{Lip}_\sigma\mathcal{N}_{\gamma,c}(u^{(n)}).$$
Also,
\begin{eqnarray}
e^{-\gamma t + cx}(G_t\ast |u_0|)(x)&=&e^{-\gamma t}\int_{\R}G_t(y-x)e^{-c(y-x)}e^{cy}|u_0(y)|dy\nonumber\\
&\leq& e^{-\gamma t}\mathcal{N}_{0,c}(u_0)\int_{\R}e^{cz}G_t(z)dz\nonumber\\
&=& e^{-\gamma t}E_\beta(\nu c^2t^\beta)\mathcal{N}_{0,c}(u_0).
\end{eqnarray}
We take $\gamma^\beta := 2\nu c^2$ to see that for all integers $k\geq 0$
$$e^{-\gamma t}E_\beta(\nu c^2t^\beta) = \sum_{k=0}^\infty \frac{\nu^k c^{2k}t^{\beta k}e^{-\gamma t}}{\Gamma(1+\beta k)}=\sum_{k=0}^\infty \frac{\nu^k c^{2k}}{\gamma^{\beta k}}\frac{u^{\beta k}e^{-u}}{\Gamma(1+\beta k)}\leq 2,$$
since $\frac{u^{\beta k}e^{-u}}{\Gamma(1+\beta k)}<1.$ In this case
$$C(c,\gamma,\beta)=\sqrt{\frac{2^{\beta/2}}{\sqrt{\nu}\Gamma(1-\beta/2)((2\nu c^2)^{1/\beta})^{1-\beta/2}}}=\sqrt{\frac{2^{\beta/2+1/2-1/\beta}}{\nu^{1/\beta}\Gamma(1-\beta/2)c^{2/\beta-1}}},$$
and $$C(c,\gamma,\beta)\mbox{Lip}_\sigma<1.$$
We see that for  all integers $n\geq 0$,
$$\mathcal{N}_{(2\nu c^2)^{1/\beta},c}(u^{(n+1)})\leq 2\mathcal{N}_{0,c}(u^{n+1}) + C(c,\gamma,\beta)\mbox{Lip}_\sigma\mathcal{N}_{(2\nu c^2)^{1/\beta},c}(u^{(n)}).$$
Similarly,
$$\mathcal{N}_{(2\nu c^2)^{1/\beta},-c}(u^{(n+1)})\leq 2\mathcal{N}_{0,-c}(u^{(n+1)}) + C(c,\gamma,\beta)\mbox{Lip}_\sigma\mathcal{N}_{(2\nu c^2)^{1/\beta},-c}(u^{(n)}).$$
Since $u_0$ has compact support, it follows that $\mathcal{N}_{0,c}(u_0)+\mathcal{N}_{0,-c}(u_0)<\infty.$ Therefore, it follows readily from the preceding discussion that, because $C(c, (2\nu c^2)^{1/\beta}, \beta)\mbox{Lip}_\sigma<1,$
$$\sup_{n\geq 0}\left[\mathcal{N}_{(2\nu c^2)^{1/\beta},c}(u^{(n+1)})+\mathcal{N}_{(2\nu c^2)^{1/\beta},-c}(u^{(n+1)})\right]<\infty.$$
Since $u_t^{(n+1)}(x)$ converges to $u_t(x)$ in $L^2(\Omega)$ as $n\rightarrow \infty$  Fatou's lemma implies that
$$\mathcal{N}_{(2\nu c^2)^{1/\beta},c}(u)+\mathcal{N}_{(2\nu c^2)^{1/\beta},-c}(u)<\infty.$$
The corollary follows readily from this fact.
\end{proof}

We are ready to prove Theorem \ref{maintheoremtwo}. We do this  in two steps adapting the method in \cite[Chapter 8]{khoshnevisan-cbms} with crucial nontrivial changes: First we derive \eqref{intermittencypart1}; and then we establish \eqref{intermittencypart2}.

\begin{proof}[{\bf Proof of  \eqref{intermittencypart1}}]
 Since $u_0$ has compact support, it follows that $|u_0(x)|= O(e^{c|x|})$ for all $c>0.$ Therefore, we may apply Corollary \ref{coro3} to an arbitrary $c^{1/\beta-1/2}>\mbox{Lip}_\sigma\sqrt{\frac{2^{\beta/2+1/2-1/\beta}}{\nu^{1/\beta}\Gamma(1-\beta/2)}}:= \mbox{Lip}_\sigma c_0$ in order to see that
\begin{eqnarray}
\mathscr{L}(\theta)&=& \limsup_{t\rightarrow\infty}\frac{1}{t}\sup_{|x|>\theta t}\log \E\left(|u_t(x)|^2\right)\leq - \sup_{c>(\mbox{Lip}_\sigma c_0)^{2\beta/(2-\beta)}}\bigg[\theta c - (2c^2)^{1/\beta}\bigg]\nonumber\\
&\leq&-\bigg[\theta (\mbox{Lip}_\sigma c_0)^{2\beta/(2-\beta)}) - 2^{1/\beta}(\mbox{Lip}_\sigma c_0)^{4/(2-\beta)})\bigg],
\end{eqnarray}
obtained by setting $c:= (\mbox{Lip}_\sigma c_0)^{2\beta/(2-\beta)}$ in the maximization problem of the first line of preceding display. The right-most quantity is strictly negative when $$\theta > \frac{2^{1/\beta}(\mbox{Lip}_\sigma c_0)^{4/(2-\beta)})}{(\mbox{Lip}_\sigma c_0)^{2\beta/(2-\beta)})};$$ this proves\eqref{intermittencypart1}.
\end{proof}

\begin{proof}[{\bf Proof of  \eqref{intermittencypart2}.}] According to \eqref{supersolution}
\begin{eqnarray}\label{intermittencyproof}
&&\E(|u_t(x)|^2)\nonumber\\
&\geq&|(G_t\ast u_0)(x)|^2 + L_\sigma^2\int_0^tds\int_{\R}dy[G_{t-s}(y-x)]^2\E(|u_s(y)|^2),
\end{eqnarray}
for all $t>0$ and $x\in\R.$ Also, note that if $x,y\in\R, 0\leq s\leq t,$ and $\a\geq 0,$ then
$$1_{[\theta t,\infty)}(x)\geq 1_{[\theta(t-s),\infty)}(x-y)\cdot 1_{[\theta s,\infty)}(y).$$
This is a consequence of the triangle inequality. Therefore,
\begin{eqnarray}
&&\int_{\theta t}^\infty\int_0^tds\int_{\R}dy[G_{t-s}(y-x)]^2\E(|u_s(y)|^2)\nonumber\\
&\geq& \int_0^tds\left(\int_{\theta(t-s)}^\infty[G_{t-s}(z)]^2dz\right)\left(\int_{\theta s}^\infty \E(u_s(y)|^2)dy\right).\end{eqnarray}
This and \eqref{intermittencyproof} together show that the function
\begin{equation}
M_+(t) := \int_{\theta t}^\infty \E(|u_s(y)|^2)dy
\end{equation}
satisfies the following renewal inequality:
\begin{equation}\label{positivepart}
M_+(t)\geq \int_{\theta t}^\infty|(G_t\ast u_0)(x)|^2dx + L_\sigma^2(T\ast M_+)(t),
\end{equation}
with $$T(t) := \int_{\theta t}^\infty[G_t(z)]^2dz.$$
Because of symmetry we can write $T(t) = \int^{-\theta t}_{-\infty}[G_t(z)]^2dz.$ Therefore, a similar argument shows that the function
$$M_{-}(t) := \int_{-\infty}^{-\theta t}\E(|u_s(y)|^2)dy,$$
satisfies the following renewal inequality:
\begin{equation}\label{negativepart}
M_{-}(t)\geq \int^{-\theta t}_{-\infty}|(G_t\ast u_0)(x)|^2dx + L_\sigma^2(T\ast M_{-})(t).
\end{equation}
Define
$$M(t) := \int_{|y|>\theta t} \E(|u_t(y)|^2)dy = M_+(t) + M_{-}(t),$$
in order to deduce from \eqref{positivepart} and \eqref{negativepart} that
$$M(t)\geq \int_{|x|>\theta t}|(G_t\ast u_0)(x)|^2dx + L_\sigma^2(T\ast M)(t).$$
Define $\mathcal{L}\phi$ to be the Laplace transform of any measurable function $\phi:\R+\rightarrow\R_+.$ That is,
$$(\mathcal{L}\phi)(\lambda) = \int_0^\infty e^{-\lambda t}\phi(t)dt\ \ \ (\lambda \geq 0).$$
Then, we have the following inequality of Laplace transforms: For every $\lambda\geq 0,$
\begin{eqnarray}\label{laplacetransform}
&&(\mathcal{L}M)(\lambda)\\
&\geq& 2\int_0^\infty e^{-\lambda t}dt\int_{\theta t}^\infty dx |(G_t\ast u_0)(x)|^2 + L_\sigma^2 (\mathcal{L}T)(\lambda)(\mathcal{L}M)(\lambda).\nonumber
\end{eqnarray}
Since $$(\mathcal{L}T)(0) = \int_0^\infty\frac{dt}{t}\int_{\theta t}^\infty[G_t(z)]^2dz.$$
Therefore, there exists $\theta_0>0$ such that $(\mathcal{L}T)(0)>L_\sigma^{-2}$ whenever $\theta\in(0,\theta_0).$ This and dominated convergence theorem together imply that there, in turn, will exist $\lambda_0>0$ such that $(\mathcal{L}T)(\lambda)> L_\sigma^{-2}$ whenever $\theta\in(0,\theta_0)$ and $\lambda\in(0,\lambda_0).$ Since $u_0>0$ on a set of positive measure, it follows readily that $$\int_0^\infty e^{-\lambda t}dt\int_{\theta t}^\infty dx |(G_t\ast u_0)(x)|^2 > 0,$$
for all $\theta, \lambda\geq 0,$ including $\theta\in(0,\theta_0)$ and $\lambda\in(0,\lambda_0).$ Therefore, \eqref{laplacetransform} implies that
\begin{equation}
(\mathcal{L}M)(\lambda) = \infty\ \ \ \mbox{for}\ \theta\in(0,\theta_0)\ \mbox{and}\ \lambda\in(0,\lambda_0).
\end{equation}
One can deduce from this and the definition of $M$ that
$$\limsup_{t\rightarrow\infty}e^{-\lambda t}\int_{|y|>\theta t}\E(|u_t(y)|^2)dy = \infty,$$
whenever $\theta\in(0,\theta_0)\ \mbox{and}\ \lambda\in(0,\lambda_0).$ This and the already-proven first part \eqref{intermittencypart1} together show that
$$\limsup_{t\rightarrow\infty}e^{-\lambda t}\int_{\theta t <|y|< \gamma t}\E(|u_t(y)|^2)dy = \infty,$$
whenever $\theta\in(0,\theta_0), \lambda\in(0,\lambda_0) \ \mbox{and}\ \gamma > \frac{2^{1/\beta}(\mbox{Lip}_\sigma c_0)^{4/(2-\beta)})}{(\mbox{Lip}_\sigma c_0)^{2\beta/(2-\beta)})}.$ Since the last integral is not greater than $(\gamma - \theta)t\sup_{|x|>\theta t}\E(|u_t(x)|^2),$ it follows that
$$\mathscr{L}(\theta)=\limsup_{t\rightarrow\infty}\frac{1}{t}\sup_{|x|>\theta t}\log \E(|u_t(x)|^2) \geq \lambda_0,$$
for $\theta\in(0,\theta_0).$ This proves \eqref{intermittencypart2} and hence the theorem.

\end{proof}


\begin{thebibliography}{99}


\bibitem{arponen-horvai-07} H. Arponen and P. Horvai, Dynamo effect in the Kraichnan magnetohydrodynamic
turbulence, J. Stat. Phys. 129 (2007), no. 2, 205--239.


\bibitem{fracCauchy}B. Baeumer and M.M. Meerschaert.
Stochastic solutions for fractional Cauchy problems,
\emph{Fractional Calculus Appl. Anal.}  {\bf 4} (2001), 481--500.

\bibitem{baxendale-rozovskii-93} P. H. Baxendale and B. L. Rozovski\u i, Kinematic dynamo and intermittence in a
turbulent
ow, Geophys. Astrophys. Fluid Dynam. 73 (1993), no. 1-4, 33--60, Magnetohydrodynamic
stability and dynamos (Chicago, IL, 1992).

\bibitem
{bertoin}
J. Bertoin.
\emph{{L\'{e}vy Processes}}.
Cambridge University Press, Cambridge (1996).



\bibitem
{Caputo}
M. Caputo.
Linear models of dissipation whose Q is almost frequency independent, Part II.
\emph{Geophys. J. R. Astr. Soc.} 13, 529-539 (1967).

\bibitem{carmona-molchanov}R. A. Carmona and S. A. Molchanov, Parabolic Anderson problem and intermittency,
Mem. Amer. Math. Soc. 108 (1994), no. 518, viii+125.

\bibitem{chen-kim-kim-2014} Z.-Q. Chen, K.-H. Kim and P. Kim. Fractional time stochastic partial differential equations.
Preprint 2014.

\bibitem{conus-khoshnevisan} D. Conus and D. Khoshnevisan.  On the existence and position of the farthest peaks of a family of stochastic
heat and wave equations, Probab. Theory Related Fields 152 (2012), no. 3-4, 681--701.

\bibitem{Dalang-Quer-Sardanyons} R.C. Dalang and L.  Quer-Sardanyons. Stochastic integrals for spde's: a comparison. Expo. Math. 29 (2011), no. 1, 67–109.

\bibitem{daPrato-Zabczyk} G. Da Prato and J. Zabczyk, Stochastic Equations in Infnite Dimensions,
Encyclopedia of Mathematics and its Applications, vol. 44, Cambridge University
Press, Cambridge, 1992.




\bibitem{feller} W. Feller. ``An Introduction to Probability Theory and Its Applications'',  Vol. II. Second edition. John Wiley \& Sons, Inc., New York-London-Sydney, 1971.

\bibitem{foondun-khoshnevisan-09} M. Foondun and D. Khoshnevisan, Intermittence and nonlinear parabolic
stochastic partial differential equations, Electron. J. Probab. 14 (2009), no. 21,
548--568.

\bibitem{galloway-03}D. Galloway. Fast Dynamos, Advances in nonlinear dynamos, Fluid Mech. Astrophys.
Geophys., vol. 9, Taylor \& Francis, London, 2003, pp. 37--59.

\bibitem{georgiou-et-al} N. Georgiou, M. Joseph, D. Khoshnevisan, P. Mahboubi, and
S.-Y. Shiu, Semi-discrete semi-linear parabolic SPDEs, 2013.

\bibitem{haubold-mathai-saxena} H. J. Haubold, A. M. Mathai and R. K. Saxena. Review Article: Mittag-Leffler functions and their applications, Journal of Applied Mathematics. Volume 2011 (2011) Article ID 298628, 51 pages


\bibitem{jumarie}G. Jumarie. Fourier's transfrom of fractional order via Mittag-Leffler function and modified Riemann-Liouville derivative. J. Appl. Math. \& Informatics Vol. 26(2008), No. 5 - 6, 1101 -- 1121


\bibitem{khoshnevisan-cbms}D. Khoshnevisan.  Analysis of stochastic partial differential equations. CBMS Regional Conference Series in Mathematics, 119. Published for the Conference Board of the Mathematical Sciences, Washington, DC; by the American Mathematical Society, Providence, RI, 2014.


\bibitem
{Koc89}
 A.N. Kochubei,
\newblock {The Cauchy problem for evolution equations of fractional order},
\newblock {Differential Equations}, 25 (1989) 967 -- 974.




\bibitem{lunardi-sinestrari} A. Lunardi and E. Sinestrari, An inverse problem in the theory of materials with memory, Nonlin. Anal. Theory
Meth. Appl. 12 (1988), 1317–1355,

\bibitem{mathai} A. M. Mathai and H. J. Haubold, Special functions for applied scientists. Springer, 2007.


\bibitem{mnv-09}
M.M. Meerschaert, E. Nane and P. Vellaisamy.
Fractional Cauchy problems on bounded domains.
\emph{Ann. Probab.} 37, 979-1007 (2009).

\bibitem
{meerschaert-nane-xiao}
 M.M. Meerschaert, E.  Nane, Y. Xiao,  Fractal dimensions for continuous time random walk limits,  Statist. Probab. Lett., 83 (2013) 1083–1093.





\bibitem{limitCTRW}  M.M. Meerschaert and  H.P. Scheffler.
Limit theorems for continuous time random walks with infinite mean
waiting times. {\it J. Applied Probab.} {\bf 41}  (2004), No. 3, 623--638.


\bibitem
{meerschaert-straka}
M.M. Meerschaert and P.  Straka.
Inverse stable subordinators.
Mathematical Modeling of Natural Phenomena, Vol. 8 (2013), No. 2, pp. 1–16.




\bibitem{nane-et-al-2014}  J. Mijena and  E. Nane. Space time fractional stochastic partial differential equations. Preprint, 2014.






\bibitem
{NANERW}
E. Nane.
\newblock {Fractional Cauchy problems on bounded domains: survey of recent results}, In : Baleanu D. et al (eds.)
\newblock Fractional Dynamics and Control, 185–198, Springer, New York, 2012.

\bibitem
{Nig86}
R.R. Nigmatullin,
\newblock The realization of the generalized transfer in a medium with fractal geometry.
\newblock{Phys. Status Solidi B}. 133 (1986) 425 -- 430.





\bibitem{OB09}
E. Orsingher, L.  Beghin,
\newblock {Fractional diffusion equations and processes with randomly varying time},
\newblock {Ann. Probab.} 37 (2009)  206 -- 249.

\bibitem{pod99} I. Podlubny.
\emph{An Introduction to Fractional Derivatives, Fractional Differential Equations, Some Methods of Their Solution and Some of Their  Applications}.
Academic Press, New York, 1999.

\bibitem{simon} T. Simon. Comparing Fr\'echet and positive stable laws. Electron. J. Probab. 19 (2014), no. 16, 1–25.








\bibitem{walsh} John B. Walsh, An Introduction to Stochastic Partial Differential Equations, \'Ecole
d'\'et\'e de Probabilit\'es de Saint-Flour, XIV|1984, Lecture Notes in Math., vol. 1180,
Springer, Berlin, 1986, pp. 265--439.

\bibitem{von-wolfersdorf} L. von Wolfersdorf, An identification of memory kernels in linear theory of heat equation, Math. Method. appl.
sci. 17 (1994) 919—-932



\bibitem
{Wyss86}
W. Wyss,
\newblock The fractional diffusion equations.
\newblock {J. Math. Phys.} 27 (1986) 2782 -- 2785.



\end{thebibliography}
\end{document}